\documentclass[11pt,reqno]{amsart}
\usepackage{amsmath,amssymb,rawfonts}
\usepackage{tikz}
\usepackage[english]{babel}
\usepackage{graphicx}
\usepackage[utf8]{inputenc}
\newcommand{\be}{\begin{equation}}
\newcommand{\ee}{\end{equation}}

\newtheorem{theorem}{Theorem}[section]

\newtheorem{defi}[theorem]{Definition}
\newtheorem{prop}[theorem]{Proposition}
\newtheorem{coro}[theorem]{Corollary}

\newtheorem{ex}[theorem]{Example}

\pagestyle{plain}
\begin{document}  
\title{ TOPOLOGICAL SEMIGROUPS EMBEDDED INTO  TOPOLOGICAL GROUPS}
\author{JULIO C\'ESAR HERN\'ANDEZ ARZUSA\\UNIVERSIDAD DE CARTAGENA\\Programa de Matemáticas}

\maketitle 

\begin{abstract}In this paper we give conditions under which a topological semigroup can be embedded algebraically and topolo\-gically into a compact topological group. We prove that every feebly compact regular first coun\-table cancellative commutative topological semigroup with open shifts is a topological group, as well as every connected locally compact Hausdorff cancellative commutative topological monoid with open shifts. Finally, we use these results to give  sufficient conditions on a topological semigroup that guarantee it to have countable cellularity.
\end{abstract}

\thanks{{\em 2010 Mathematics Subject Classification.} Primary: 54B30, 18B30, 54D10; Secondary: 54H10, 22A30 \\
{\em Key Words and Phrases: Cancellative topological semigroup, cellularity, $\sigma$-compact space, feebly compact space. } }

\section{INTRODUCTION} 

In  \cite{hom} the author presents some properties that allow us to embed, topologically, a cancellative commutative topological semigroup into a topological group, we take advantage of this results to find conditions under which a cancellative commutative topological semigroup has countable cellularity, as well as, when a topological semigroup is a  topological group. The class of topological spaces having countable cellularity is wide, in fact, it contains (among other classes) the class of the $\sigma$-compact paratopolo\-gical groups (see \cite[Corollary 2.3]{T3}), the class of the sequentially compact $\sigma$-compact  cancellative  topolo\-gical monoids  (see \cite[Theorem 4.8]{mio}) and  the class of the subsemigroups of precompact topological groups (see \cite[Corollary 3.6]{T5}). These results have served us as motivation, thus we try to find similar results in the context of topological semigroups.\\ Compactness type conditions (compactness and sequential compactness)  under which a topological semigroup is a topological group are given in \cite[Theorem 2.5.2]{Ar}, \cite[Theorem 2.4]{San} and \cite[Theorem 6]{can}, we use the feeble compactness and local compactness  to obtain similar results. The reflection on the class of the regular spaces allows us to disregard the axioms of separation to obtain topological monoids with countable cellularity.

\section{PRELIMINARIES}We  denote by $\mathbb{Z}$, $\mathbb{R}$ and $\mathbb{N}$, the set of all the integer numbers, real numbers and positive integer numbers, respectively. If $A$ is a set, $|A|$ will denote the cardinal of $A$, $\aleph_0=|\mathbb{N}|$. If $X$ is a topological space and $x\in X$, $N_{x}^{(X)}$ will denote the set of all open neighborhoods of $x$ in $X$ or simply $N_{x}$ when the space is understood. \\
\noindent A \textit{semigroup} is a set $S\neq \emptyset$, endowed with an associative operation. If  $S$ also has neutral element, we say that $S$ is a \textit{monoid}. A mapping $f\colon S\longrightarrow H$ between semigroups is a \textit{homomorphism} if $f(xy)=f(x)f(y)$ for all $x,y\in S$. A \textit{semitopological semigroup} (monoid) consists of a semigroup (resp. monoid) $S$ and a topology $\tau$ on $S$, such that for all $a\in S$, the shifts $x\mapsto ax$ and $x\mapsto xa$ (noted by $l_{a}$ and $r_{a}$, respectively)  are continuous mappings from $S$ to itself. We say that a semitopological semigroup has \textit{open shifts}, if for each $a\in S$ and for each open set  $U$ in $S$, we have that $l_{a}(U)$ and $r_{a}(U)$ are open sets in $S$. A \textit{topological semigroup} (monoid)(\textit{paratopological group}) consists of a semigroup (resp. monoid)(resp. group) $S$ and a topology $\tau$ on $S$, such that the operation of $S$ is jointly continuous. Like \cite{Ar} we do not require that semigroups to be Hausdorff. If $S$ is a paratopological group and if also the mapping $x\mapsto x^{-1}$ is continuous, we say that $S$ is a topological group. A \textit{congruence} on a semigrouop $S$ is an equivalence relation on $S$, $\sim$, such that if $x\sim y$ and $a\sim b$, then $xa\sim yb$. If $S$ is a semitopological semigroup, then we say that $\sim$ is a \textit{closed congruence} if $\sim$ is closed in $S\times S$.  If $\sim$ is an equivalence relation in a semigroup (monoid) $S$ and $\pi\colon S\longrightarrow S/\sim$ is the respective quotient mapping, then $S/\sim$ is a semigroup (monoid) and $\pi$ an homomorphism  if and only if $\sim$ is a congruence (\cite{GONZO}, Theorem 1).\\ The axioms of separation $T_{0}$, $T_1$, $T_{2}$, $T_{3}$ and \textit{regular} are defined in accordance with \cite{En}. We denote by $\mathcal{C}_{i}$ the class of the $T_i$ spaces, where $i\in\{0,1,2,3,r\}$.\\Let $X$ be a topological space, a \textit{cellular family} in $X$ is a pairwise disjoint non empty family of non empty open sets in $X$. The \textit{cellularity} of a space $X$ is noted by $c(X)$ and it is defined by $$c(X)=sup\{|U|: U \mbox{ is cellular familiy in $X$}\}+\aleph_{0}. $$ If $c(X)=\aleph_{0}$, we say that $X$ has \textit{countable cellularity} or $X$ \textit{has the Souslin property}.\\If $X$ is a topological space and $A\subseteq X$, we will note by $Int_{X}(A)$ and $Cl_{X}(A)$, the interior and the closure of $A$ in $X$, or simply $Int(A)$ and $\overline{A}$, respectively,  when the space $X$ is understood. An open set $U$ in $X$, is called \textit{regular open} in $X$ if $Int\overline{U}=U$. It is easy to prove the regular open ones form a base for a topology in $X$, $X$ endowed with this topology,  will note by $X_{sr}$, which we will call  \textit{semiregularitation} of $X.$
 \\From \cite{Epi}, it is well known that for each $i\in \{0,1,2,3, r\}$ and every topological space, $X$, there is a topological space, $\mathcal{C}_{i}(X)\in \mathcal{C}_i$, (unique up to  homeomorphism) and a continuous mapping $\varphi_{(X, \mathcal{C}_{i})}$ of $X$ onto $\mathcal{C}_{i}(X)$, such that given a continuous mapping $f\colon X\longrightarrow Y$, being $Y\in \mathcal{C}_i$, there exists an unique continuous mapping $g\colon \mathcal{C}_{i}(X)\longrightarrow Y	$, such that $g\circ \varphi_{(X, \mathcal{C}_{i})}=f$.\\ According to \cite[Section 2]{comp} a $SAP$-compactification of a semitopological semigroup $S$ is a pair $(G, f)$ consisting of a compact Hausdorff topological group $G$ and a continuous homomorphism $f\colon S\longrightarrow G$ such that for each continuous homomorphism $h\colon S\longrightarrow  K$, being $K$ a compact Hausdorff topological group, there is an unique continuous homomorphism $h^{\ast}\colon G\longrightarrow K$ such that $h=h^{\ast}\circ f$.\\A space $X$ is \textit{feebly  compact} if each locally finite family of open sets in $X$  is finite. By \cite[Theorem 1.1.3]{pseudo}, pseudocompactness is equivalent to feeble compactness in the class of the Tychonoff spaces.\\A space $X$ is \textit{locally compact} if each $x\in X$ has a compact neighborhood.
 \\The following proposition gives us some properties of $\mathcal{C}_i (S)$, when $S$ is a topological monoid with open shifts.

\begin{prop}\label{0206}Let $S$ be a topological monoid with open shifts. Then
\begin{itemize}
\item[i)]$\mathcal{C}_r(S)$ is a monoid,  $\varphi_{(S, \mathcal{C}_i)}$ is an homomorphism (see \cite[Proposition 3.8]{mio2}) and $\mathcal{C}_r(S)=\mathcal{C}_{0}(S_{sr})$  ( see \cite[Theorem 2.8]{mio}). If $S$  is also cancellative, $\mathcal{C}_r(S)$ is cancellative  (see \cite[Lemma 4.6]{mio}).
\item[ii)]If $A$ is an   open set in $S$, then $\mathcal{C}_{r}(A)=\varphi_{(S,\mathcal{C}_r)}(A)$ ( see \cite[Corollary 5.7]{mio2} and \cite[Lemma 4]{semi}).
\item[iii)]$c(S)=c(\mathcal{C}_i (S)$, for each $i\in \{0,1,2,3,r\}$.
\item[iv)]$\mathcal{C}_3(S)=S_{sr}$. Moreover, if $S$ is $T_2$, $\mathcal{C}_r(S)=S_{sr}$ (see \cite[Corollary 2.7]{mio} and \cite[Proposition 1]{semi}).
\item[v)]$\varphi_{(S, \mathcal{C}_i)}$ is open for each $i\in \{0,1,2\}$ (see \cite[Proposition 2.1]{mio}).
\item[vi)] If $S$ is a paratopolgical group, $\mathcal{C}_r(S)$ is a paratopological group (see \cite[Corollary 3.3 and Theorem 3.8 ]{T4}, \cite[Theorem 2.4]{T2}).
\end{itemize}
\end{prop}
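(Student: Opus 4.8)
The plan is to dispatch the six items one by one, in each case reducing the claim to a result already in the literature and filling in only the small amount of glue the present formulation demands; the single item that calls for a genuine argument is (iii).

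For (i), I would start from \cite[Proposition 3.8]{mio2}, which gives that $\varphi_{(S,\mathcal{C}_i)}$ is a homomorphism. Since $\varphi_{(S,\mathcal{C}_r)}$ is onto, its image $\mathcal{C}_r(S)$ automatically carries a monoid structure, and a one-line check against the couniversal property of the reflection shows that this is the unique topological-monoid structure on $\mathcal{C}_r(S)$ for which $\varphi_{(S,\mathcal{C}_r)}$ is a homomorphism. The identity $\mathcal{C}_r(S)=\mathcal{C}_0(S_{sr})$ is \cite[Theorem 2.8]{mio}, and preservation of cancellativity is \cite[Lemma 4.6]{mio}. For (ii) one combines \cite[Corollary 5.7]{mio2} with \cite[Lemma 4]{semi}; here the role of the hypothesis ``open shifts'' is to guarantee that the open set $A$ inherits the structure needed to apply those results. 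Item (iv) is \cite[Corollary 2.7]{mio} for $\mathcal{C}_3(S)=S_{sr}$ together with \cite[Proposition 1]{semi} for the Hausdorff case $\mathcal{C}_r(S)=S_{sr}$; item (v) is \cite[Proposition 2.1]{mio}; and item (vi) follows from \cite[Corollary 3.3 and Theorem 3.8]{T4} and \cite[Theorem 2.4]{T2}.

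It remains to prove $c(S)=c(\mathcal{C}_i(S))$. One inequality is free: since $\varphi:=\varphi_{(S,\mathcal{C}_i)}$ is a continuous surjection, the preimages of the members of a cellular family in $\mathcal{C}_i(S)$ constitute a cellular family of the same cardinality in $S$, whence $c(\mathcal{C}_i(S))\leqslant c(S)$. For the reverse inequality, I would first observe that $S$ and $S_{sr}$ share the same regular open sets and hence the same cellularity; combined with (iv) this settles $i=3$ outright and, via (i), reduces $i=r$ to the case $i=0$, so it is enough to treat $i\in\{0,1,2\}$, where by (v) the reflection $\varphi$ is open. Given a cellular family $\{U_\beta\}$ in $S$, I would replace each $U_\beta$ by the regular open set $\mathrm{Int}\,\overline{U_\beta}$ and consider the sets $\mathrm{Int}\,\overline{\varphi(U_\beta)}$ in $\mathcal{C}_i(S)$; openness of $\varphi$ keeps them nonempty, and the point is that they stay pairwise disjoint. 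The main obstacle is precisely this last disjointness: it is where one must use more about $\varphi$ than openness --- namely that it does not amalgamate topologically distinguishable points, so that a point of $\mathcal{C}_i(S)$ every neighbourhood of which met two of the $\varphi(U_\beta)$ would pull back to a point of $S$ with the same defect, contradicting disjointness of the $U_\beta$. In other words, the argument leans on the concrete quotient description of $\mathcal{C}_i$ and not merely on its abstract universal property; phrasing this cleanly, uniformly in $i\in\{0,1,2\}$, is the delicate part of the proof.
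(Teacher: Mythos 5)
The paper gives no proof of this proposition at all: every item except (iii) is discharged by the citations embedded in the statement, and (iii) is asserted without reference or argument. So your instinct that (iii) is the only item requiring real work is right, and your treatment of (i), (ii), (iv), (v) and (vi) coincides with what the paper does, namely quoting the literature.

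Your proof of (iii), however, has a genuine gap exactly where you flag it, and the mechanism you propose for closing it is false. For $i=0$ the argument is fine — indeed you need neither openness of $\varphi$ nor the regular-open trick, since the $T_0$-reflection identifies only topologically indistinguishable points, so $\varphi^{-1}(\varphi(U))=U$ for every open $U$ and disjoint open sets have disjoint images; this holds for arbitrary spaces. But for $i\in\{1,2\}$ the principle that ``$\varphi$ does not amalgamate topologically distinguishable points'' is simply wrong: the $T_1$-reflection of the Sierpi\'nski space (a two-point $T_0$ space) is a single point, so it amalgamates two topologically distinguishable points, and nothing in the hypotheses rules out analogous collapsing inside $S$. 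Hence the pairwise disjointness of the sets $\mathrm{Int}\,\overline{\varphi(U_\beta)}$ cannot be extracted from that principle, and your argument for $i\in\{1,2\}$ does not close. The claim is nevertheless true, by a sandwich argument that sidesteps the issue: since every regular $T_0$ space is $T_2$, every $T_2$ space is $T_1$ and every $T_1$ space is $T_0$, the universal properties of the reflections produce a chain of continuous surjections $S\to\mathcal{C}_0(S)\to\mathcal{C}_1(S)\to\mathcal{C}_2(S)\to\mathcal{C}_r(S)$, so the easy direction you already proved gives $c(\mathcal{C}_r(S))\leq c(\mathcal{C}_2(S))\leq c(\mathcal{C}_1(S))\leq c(\mathcal{C}_0(S))\leq c(S)$; on the other hand $c(\mathcal{C}_r(S))=c(\mathcal{C}_0(S_{sr}))=c(S_{sr})=c(S)$ by item (i), the $T_0$ case, and the computation $c(X)=c(X_{sr})$ via regular open sets, so all the inequalities collapse to equalities. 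The case $i=3$ is then handled by (iv) together with $c(X)=c(X_{sr})$, as you indicate.
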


\noindent It is easy to see that if  $X$ is a first countable topological space, then $X_{sr}$ is first countable. Since $\mathcal{C}_r(S)=\mathcal{C}_{0}(S_{sr})$ and $\varphi_{(S, \mathcal{C}_0)}$ is open, whenever $S$ is a topological monoid with open shifts, we have the following corollary.

\begin{coro}\label{0207}If $S$ is a first-countable topological monoid with open shifts, then $\mathcal{C}_r(S)$ is first-countable.
\end{coro}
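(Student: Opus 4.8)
The plan is to realize $\mathcal{C}_r(S)$ as the open, continuous image of a first-countable space. By Proposition \ref{0206}(i) we have $\mathcal{C}_r(S)=\mathcal{C}_0(S_{sr})$, so it is enough to prove that $\mathcal{C}_0(S_{sr})$ is first-countable, and for this I would establish two things: that $S_{sr}$ is first-countable, and that the canonical map $\varphi_{(S_{sr},\mathcal{C}_0)}$ is open.

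For the first point I would simply spell out the ``easy to see'' remark preceding the statement: if $\{U_n:n\in\mathbb{N}\}$ is a base at a point $x$ of $S$, then each $Int\overline{U_n}$ is regular open in $S$ and contains $x$, and for every regular open set $V$ with $x\in V$ one picks $n$ with $U_n\subseteq V$, so that $x\in Int\overline{U_n}\subseteq Int\overline{V}=V$; hence $\{Int\overline{U_n}:n\in\mathbb{N}\}$ is a base at $x$ in $S_{sr}$. For the second point I would use that, since $S$ is a topological monoid with open shifts, so is $S_{sr}$ (this is part of the material behind Proposition \ref{0206}(i); cf.\ \cite{mio}), so that Proposition \ref{0206}(v) applied to $S_{sr}$ with $i=0$ gives that $\varphi_{(S_{sr},\mathcal{C}_0)}\colon S_{sr}\longrightarrow \mathcal{C}_0(S_{sr})$ is continuous, open and onto. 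Finally I would invoke the elementary fact that a continuous open surjection carries first-countability forward: for $z\in\mathcal{C}_0(S_{sr})$ pick $y$ with $\varphi_{(S_{sr},\mathcal{C}_0)}(y)=z$ and a countable base $\{W_n:n\in\mathbb{N}\}$ at $y$; then $\{\varphi_{(S_{sr},\mathcal{C}_0)}(W_n):n\in\mathbb{N}\}$ is a countable neighborhood base at $z$ (open sets by openness of the map, a base by its continuity). Since $S_{sr}$ is first-countable, this shows $\mathcal{C}_0(S_{sr})=\mathcal{C}_r(S)$ is first-countable.

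There is no deep difficulty here — it is a corollary of Proposition \ref{0206} — but the step that needs care is the passage through $S_{sr}$. One cannot shortcut it by applying Proposition \ref{0206}(v) to $S$ directly: that would only yield first-countability of $\mathcal{C}_0(S)$, which is not $\mathcal{C}_r(S)$, and part (v) deliberately excludes $i=r$, so $\varphi_{(S,\mathcal{C}_r)}$ is not asserted to be open. The substantive input is therefore that $S_{sr}$ is again a topological monoid with open shifts, so that part (v) becomes applicable to it; once that is in hand, the remainder is the routine fact that open continuous images preserve first-countability, together with the identification $\mathcal{C}_r(S)=\mathcal{C}_0(S_{sr})$.
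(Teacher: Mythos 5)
Your argument is correct and is essentially the paper's own: the corollary is derived exactly as in the remark preceding it, from the first-countability of $S_{sr}$, the identification $\mathcal{C}_r(S)=\mathcal{C}_0(S_{sr})$, and the openness of $\varphi_{(S_{sr},\mathcal{C}_0)}$. You merely fill in the details the paper leaves implicit (in particular that Proposition \ref{0206}(v) must be applied to $S_{sr}$ rather than to $S$), which is a faithful and careful rendering of the intended proof.
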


\section{EMBEDDING TOPOLOGICAL SEMIGROUPS INTO TOPOLOGICAL  GROUPS}
Let $S$ be a cancellative  commutative   semigroup,  $S\times S$ is a cancellative commutative   semigroup, by defining the operation coordinate wise. Let us define in $S\times S$ the following relation: $(x,y)\mathcal{R}(a,b)$ if and only if $xb=ya$. It is not to hard to prove that $\mathcal{R}$ is a congruence, hence if $\pi \colon S\times S \longrightarrow (S\times S)/\mathcal{R}$ is the respective quotient mapping, the operation induced by $\pi$ makes of $(S\times S)/\mathcal{R}$ a semigrup. It is easy to prove that $(S\times S)/\mathcal{R}$ is a group, where the equivalence class $\{\pi((x,x)): x\in S\}$ is the neutral element, and the inverse of $\pi((a,b))$ is $\pi((b,a))$. Also, the function $\iota\colon S\longrightarrow (S\times S)/\mathcal{R}$ defined by $\iota (x)=\pi((xa, a))$, for each $x\in S$,  is an algebraic monomorphism, where $a$ is a fixed element of $S$ ($\iota$ does not depend of the choice of $a$). Note that $\pi(x,y)=\pi(xa^{2},ya^{2})=\pi(xa,a)\pi(a,ya)=\iota(x)(\iota(y))^{-1},$ therefore if we identify each element of $S$ with its image under $\iota$, we have that each element of $(S\times S)/\mathcal{R}$ can be written as $xy^{-1}$, where $x,y\in S$, that is to say $(S\times S)/\mathcal{R}=SS^{-1}.$  Let $f\colon S\longrightarrow G$ be a homomorphism to an abelian topological group  $G$, there is an unique homomorphism $f^{\ast}\colon (S\times S)/\mathcal{R}\longrightarrow G$  (defined as $f^{\ast}(\pi((x,y))=xy^{-1}$) such that $f^{\ast}\mid _S=f.$  In summary, we have that the class of the abelian  groups is a reflexive subcategory of the class of the cancellative  commutative semigroups. Since the reflections are unique up to isomorphims, $(S\times S)/\mathcal{R}$ is uniquely  determined by $S$, and we will denote by $\langle S\rangle$ and it is called \textit{ group generated} by $S$. If $S$ is a topological semigroup, we will call $\langle S\rangle^{\ast}$ to  $\langle S\rangle$  endowed with the quotient topology induced by $\pi\colon S\times S \longrightarrow \langle S \rangle$, $S\times S$ endowed with the Tychonoff product topology.  \\

\begin{defi}Let $S$ be a topological semigroup. We say that $S$ has continuous division if give $x,y\in S$ and an open set $V$ in $S$, containing $y$, there are open sets in $S$, $U$ and $W$, containing $x$ and $xy$ ($yx$), respectively; such that $W\subseteq \bigcap\{uV:u\in U\}$ ($W\subseteq \bigcap\{Vu:u\in U\}$).
\end{defi}

\begin{prop} \label{16103}(\cite[Theorem 1.15 and Theorem 1.19]{hom}) If $S$ is a cancellative and commutative Hausdorff  topological semigroup with open shifts, then $\langle S \rangle^{\ast}$ is a Hausdorff topological group and the quotient mapping $\pi \colon S\times S\longrightarrow\langle S\rangle^{\ast}$ is open. 	Furthermore,  if $S$ has continuous division, $\iota\colon S\longrightarrow \iota(S)$ is an homeomorphism and $\iota (S)$ is open in $\langle S \rangle^{\ast}$.
\end{prop}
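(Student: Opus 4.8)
The plan is to argue in three stages. \emph{Stage 1 (the real work): $\pi$ is open.} First I would record an explicit description of the congruence classes. Using cancellativity and commutativity, $(a,b)\mathcal{R}(u,v)$ holds if and only if there are $s,t\in S$ with $as=ut$ and $bs=vt$: for $(\Leftarrow)$ multiply $as=ut$ by $v$ and $bs=vt$ by $u$, match the right-hand sides via commutativity, and cancel $s$; for $(\Rightarrow)$, given $av=bu$ take $s=v$ and $t=b$. From this, for any $W\subseteq S\times S$,
\[
\pi^{-1}(\pi(W))=\bigcup_{s,t\in S}(r_s\times r_s)^{-1}\bigl((r_t\times r_t)(W)\bigr),
\]
since $(a,b)\in\pi^{-1}(\pi(W))$ iff some $(as,bs)$ lies in some $(r_t\times r_t)(W)$. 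If $W$ is open, each $(r_t\times r_t)(W)$ is open because $S$ has open shifts, and then each term on the right is open by continuity of $r_s\times r_s$; hence $\pi^{-1}(\pi(W))$ is open and $\pi$ is an open map.

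\emph{Stage 2: topological group and Hausdorffness.} Since $\pi$ is a continuous open surjection it is a quotient map, and so is $\pi\times\pi$ onto $\langle S\rangle^{\ast}\times\langle S\rangle^{\ast}$. The group operation $\mu$ of $\langle S\rangle$ satisfies $\mu\circ(\pi\times\pi)=\pi\circ\nu$, where $\nu$ is the coordinatewise multiplication on $(S\times S)^2$, which is continuous because $S$ is a topological semigroup; as $\pi\times\pi$ is quotient, $\mu$ is continuous. Likewise inversion on $\langle S\rangle$ corresponds under $\pi$ to the coordinate swap on $S\times S$, a homeomorphism, so inversion is continuous; thus $\langle S\rangle^{\ast}$ is a topological group. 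For Hausdorffness I would use that, for an open surjection, the quotient is Hausdorff iff the relation is closed; here $\mathcal{R}=\{((x,y),(a,b)):xb=ya\}$ is the preimage of the diagonal of $S\times S$ under the continuous map $((x,y),(a,b))\mapsto(xb,ya)$, and that diagonal is closed because $S$ is Hausdorff, so $\mathcal{R}$ is closed and $\langle S\rangle^{\ast}$ is Hausdorff.

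\emph{Stage 3: the continuous-division clause.} Fix $a\in S$, so $\iota(x)=\pi(xa,a)$; then $\iota$ is continuous ($\pi$ composed with $x\mapsto(r_a(x),a)$) and injective. For an open set $O\subseteq S$, cancelling $a$ gives $\pi^{-1}(\iota(O))=\{(p,q)\in S\times S:p\in qO\}$. To see this is open, take $(p_0,q_0)$ in it, write $p_0=q_0s_0$ with $s_0\in O$, and apply continuous division to $x=q_0$, $y=s_0$, and the neighborhood $O$ of $s_0$: this yields open $U\ni q_0$ and $W\ni q_0s_0=p_0$ with $W\subseteq\bigcap_{u\in U}uO$, and then $W\times U$ is a neighborhood of $(p_0,q_0)$ contained in $\{(p,q):p\in qO\}$. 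Hence $\iota(O)$ is open in $\langle S\rangle^{\ast}$ for every open $O$; taking $O=S$ shows $\iota(S)$ is open, and letting $O$ vary shows $\iota$ is open onto its image, so $\iota\colon S\to\iota(S)$ is a continuous open bijection, i.e.\ a homeomorphism.

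\emph{Expected obstacle.} The only substantive step is Stage 1, and inside it the delicate point is the combinatorial description of the $\mathcal{R}$-classes — this is precisely where cancellativity and commutativity enter; once that identity is available, open shifts finish the argument and Stages 2 and 3 are essentially formal. I would also take care to justify the two facts about open quotient maps used in Stage 2 — that $\pi\times\pi$ is again a quotient map, and that Hausdorffness of the quotient is equivalent to closedness of the relation — since both of these genuinely rely on $\pi$ being open rather than merely continuous.
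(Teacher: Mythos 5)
The paper gives no proof of this proposition at all --- it is quoted from Rothman's thesis \cite[Theorems 1.15 and 1.19]{hom} --- so there is nothing in the text to compare your argument against; I can only assess it on its own terms, and it checks out. Your characterization of the congruence classes ($(a,b)\mathcal{R}(u,v)$ iff $as=ut$ and $bs=vt$ for some $s,t$) is verified correctly in both directions using commutativity and cancellation, and the resulting identity $\pi^{-1}(\pi(W))=\bigcup_{s,t}(r_s\times r_s)^{-1}\bigl((r_t\times r_t)(W)\bigr)$ does reduce openness of $\pi$ to the open-shifts hypothesis. Stage 2 correctly invokes the two standard facts about open quotient maps (a product of open continuous surjections is again a quotient map; for an open quotient, Hausdorffness of the quotient is equivalent to closedness of the relation), and your identification of $\mathcal{R}$ as the preimage of the diagonal under $((x,y),(a,b))\mapsto(xb,ya)$ is exactly what makes the Hausdorff hypothesis on $S$ do its work. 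Stage 3 is also sound: the computation $\pi^{-1}(\iota(O))=\{(p,q):p\in qO\}$ uses cancellation of the fixed element $a$, and the continuous-division hypothesis applied at $x=q_0$, $y=s_0$, $V=O$ produces precisely the product neighborhood $W\times U$ you need. This is a complete, self-contained proof of a result the paper merely cites.
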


\begin{prop} \label{16101}Every open subsemigroup of a topological group has continuous division.
\end{prop}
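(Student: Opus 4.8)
The plan is to reduce the statement to the joint continuity of the two‑variable ``division'' maps in the ambient topological group. Write $G$ for the topological group and $S$ for the open subsemigroup. Since $S$ is open in $G$, a subset of $S$ is open in $S$ if and only if it is open in $G$; and for $u,v\in S$ the product $uv$ is the same computed in $S$ or in $G$. Fix $x,y\in S$ and an open set $V$ in $S$ with $y\in V$; I will produce the required neighbourhoods for the left version, the right version being symmetric.

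First I would rephrase the conclusion. The condition $W\subseteq\bigcap\{uV:u\in U\}$ says that for every $u\in U$ and $w\in W$ we have $w\in uV$, i.e. $u^{-1}w\in V$; letting $u$ and $w$ range over $U$ and $W$ this is exactly $U^{-1}W\subseteq V$. Now $x^{-1}(xy)=y\in V$, and $V$, being open in the open subspace $S$, is open in $G$; since the map $(g,h)\mapsto g^{-1}h$ from $G\times G$ to $G$ is continuous, there are open sets $U_{0},W_{0}$ in $G$ with $x\in U_{0}$, $xy\in W_{0}$ and $U_{0}^{-1}W_{0}\subseteq V$.

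Set $U=U_{0}\cap S$ and $W=W_{0}\cap S$. Both are open in $S$; moreover $x\in U$ because $x\in S$, and $xy\in W$ because $xy\in S$ (as $S$ is a subsemigroup). Finally $U^{-1}W\subseteq U_{0}^{-1}W_{0}\subseteq V$, so $W\subseteq\bigcap\{uV:u\in U\}$, as required. For the right‑hand version one repeats the argument with the continuous map $(g,h)\mapsto hg^{-1}$ at the point $(yx,x)$: since $(yx)x^{-1}=y\in V$ there are open $U_{0}\ni x$ and $W_{0}\ni yx$ in $G$ with $W_{0}U_{0}^{-1}\subseteq V$, and intersecting with $S$ gives open $U\ni x$, $W\ni yx$ in $S$ with $WU^{-1}\subseteq V$, that is $W\subseteq\bigcap\{Vu:u\in U\}$.

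The argument is essentially immediate from the group axioms, so I do not expect a genuine obstacle. The only points that require care are the bookkeeping between $S$ and $G$ — that openness and multiplication are inherited correctly from $G$ — and the translation of the somewhat unusual formulation $W\subseteq\bigcap\{uV:u\in U\}$ into the familiar inclusion $U^{-1}W\subseteq V$.
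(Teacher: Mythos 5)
Your proof is correct and follows essentially the same route as the paper's: both use the joint continuity of division in $G$ at the point $(x,xy)$ (resp. $(yx,x)$) to find open sets $U_{0},W_{0}$ in $G$ with $U_{0}^{-1}W_{0}\subseteq V$, and then intersect with the open set $S$. The only difference is cosmetic — you explicitly translate the condition $W\subseteq\bigcap\{uV:u\in U\}$ into $U^{-1}W\subseteq V$ before applying continuity, which the paper does implicitly at the end.
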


\begin{proof} Let $S$ be an open subsemigroup  of a topological group $G$.
Let $x,y \in S$, since $G$ is a group, $x^{-1}xy=y$. Let  $V$ be an open subset of $S$ containing $y$, then $V$ is open in $G$,  the continuity of the operations of $G$,  implies that there are open subsets in $G$, $K$ and $M$, containing  $x$ and $xy$, respectively, such that $K^{-1}M\subseteq V$. Let us put $U=K\cap S$ and  $W=M\cap S$, then $U$ and $W$ are open subsets of $S$ containing $x$ and $xy$, respectively. Now, if $t\in W$ and $u\in U$, then $u^{-1}t\in K^{-1}M\subseteq V$, hence $t\in uV$, therefore $W\subseteq uV$, for every $u\in U$. We have proved that  $W\subseteq \bigcap_{u\in U} uV$. For $yx$ we proceed analogously.
\end{proof}

\noindent So far we just have embedded, algebraically, semigroups into groups, the following proposition gives us  a topological and algebraic embedding.

\begin{prop}\label{1610} Let $S$ be a cancellative commutative semitopological semigroup with open shifts. There exists a topology $\tau$ in $\langle S\rangle$, such that $(\langle S\rangle, \tau)$ is a semitopological group containing $S$ as an open semigroup. Moreover

\begin{itemize}
\item[i)]$S$ is 1-contable if and only if $(\langle S\rangle,\tau)$ is 1-contable.
\item[ii)] $(\langle S\rangle, \tau)$ is a partopological group if and only if $S$ is a topological semigroup.
\item[iii)]If $S$ is $T_2$, $(\langle S\rangle, \tau)$ is $T_2$.
\item[iv))]If $S$ is Hausdorff and   locally compact, $(\langle S\rangle,\tau)$ is a   locally compact Hausdorff topological group.
\end{itemize}
\end{prop}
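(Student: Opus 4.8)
The plan is to build $\tau$ explicitly as a \emph{final topology}. Consider the family of injections $\mu_a\colon S\to\langle S\rangle$, $\mu_a(s)=a^{-1}s$, indexed by $a\in S$; since $\langle S\rangle=SS^{-1}$ and $S$ is commutative, $\langle S\rangle=\bigcup_{a\in S}a^{-1}S$, with the sets $a^{-1}S$ covering $\langle S\rangle$ and directed by inclusion (indeed $a^{-1}S\cup b^{-1}S\subseteq (ab)^{-1}S$). Declare $W\subseteq\langle S\rangle$ to be $\tau$-open exactly when $\mu_a^{-1}(W)$ is open in $S$ for every $a\in S$; this is automatically a topology. The first and key step is to show that each $\mu_a$ is an \emph{open topological embedding}: injectivity is cancellation, continuity is immediate from the definition of $\tau$, and openness holds because for an open $V\subseteq S$ a direct computation gives $\mu_b^{-1}(\mu_a(V))=l_a^{-1}(bV)$, which is open since $S$ has open shifts and $l_a$ is continuous. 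In particular each $a^{-1}S$ is $\tau$-open and $\mu_a$ maps $S$ homeomorphically onto it. Applying this to the open set $aS$ yields that $\iota(S)=\mu_a(aS)$ is open in $\langle S\rangle$, and since $\iota=\mu_a\circ l_a$ with $l_a\colon S\to aS$ a homeomorphism, $\iota$ embeds $S$ topologically onto an open subsemigroup of $\langle S\rangle$. Thus $\langle S\rangle$ is covered by $\tau$-open copies of $S$, which will be the working principle for everything else.

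Next I would verify that $(\langle S\rangle,\tau)$ is a semitopological group. Writing $g=xy^{-1}$ with $x,y\in S$, one checks the identity $L_g\circ\mu_a=\mu_{ya}\circ l_x$, a composite of continuous maps; by the universal property of the final topology this forces $L_g$ to be continuous, and commutativity of $\langle S\rangle$ gives the same for right translations. Hence multiplication is separately continuous.

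For the four items, items (i) and (iii) reduce to the covering principle. Any two points $g=x_1y_1^{-1}$ and $h=x_2y_2^{-1}$ both lie in the single $\tau$-open set $(y_1y_2)^{-1}S$, which is homeomorphic to $S$; if $S$ is $T_2$ they are separated there, proving (iii). Likewise, first countability passes down to the open subspace $\iota(S)$, and conversely up from $S$, since every point of $\langle S\rangle$ has a $\tau$-open neighbourhood homeomorphic to $S$; this gives (i). For (ii), if $S$ is a topological semigroup then on the $\tau$-open set $\mu_{y_1}(S)\times\mu_{y_2}(S)$ the multiplication of $\langle S\rangle$ equals $\mu_{y_1y_2}\circ m_S\circ(\mu_{y_1}^{-1}\times\mu_{y_2}^{-1})$, hence is continuous there; since such products cover $\langle S\rangle\times\langle S\rangle$, multiplication is jointly continuous and $\langle S\rangle$ is a paratopological group. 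The converse is trivial because $S$ sits in $\langle S\rangle$ as a subsemigroup carrying the subspace topology.

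Finally, for (iv): by (iii) the space $\langle S\rangle$ is Hausdorff, and because every point has a neighbourhood homeomorphic to the locally compact space $S$, $\langle S\rangle$ is locally compact. Thus $(\langle S\rangle,\tau)$ is a locally compact Hausdorff semitopological group, and Ellis's theorem (see \cite{Ar}) upgrades it to a topological group. The main obstacle is located precisely here: in (iv) we are \emph{not} told that the operation of $S$ is jointly continuous, so neither Proposition~\ref{16103} nor item (ii) can be invoked to get joint continuity on $\langle S\rangle$ directly — Ellis's theorem is exactly what bridges that gap. A secondary technical point, relevant in (ii), is that a final topology does not in general commute with products, which is why joint continuity of multiplication must be checked locally, on the covering family $\mu_{y_1}(S)\times\mu_{y_2}(S)$, rather than by a formal argument on $\langle S\rangle\times\langle S\rangle$.
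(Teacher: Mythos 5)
Your proof is correct, but it is organized quite differently from the paper's. The paper fixes one element $x\in S$ and topologizes $\langle S\rangle$ by declaring $\mathcal{B}=\{x^{-1}V:V\in N_x^{(S)}\}$ to be a neighbourhood base at the identity, then verifies the Ravsky conditions from \cite[Page 93]{Ravsky} (conditions 1, 3, 4 for a semitopological group, adding condition 2 when $S$ is a topological semigroup to get item ii)), and afterwards checks by hand that $S$ is open, that the subspace topology on $S$ is the original one, that $\{gx^{-1}V\}$ is a local base at each $g$ (giving i) and local compactness), and separates $ab^{-1}$ from $cd^{-1}$ via neighbourhoods of $ad$ and $bc$ for iii). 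You instead define $\tau$ as the final topology for the family $\mu_a(s)=a^{-1}s$, prove each $\mu_a$ is an open embedding via the identity $\mu_b^{-1}(\mu_a(V))=l_a^{-1}(bV)$ (which is exactly where cancellativity, commutativity, open shifts and continuity of shifts all enter), and then read off i), iii) and local compactness in iv) from the fact that $\langle S\rangle$ is covered by open copies of $S$; separate continuity comes from $L_g\circ\mu_a=\mu_{ya}\circ l_x$ and the universal property, and joint continuity in ii) from the local factorization $\mu_{y_1y_2}\circ m_S\circ(\mu_{y_1}^{-1}\times\mu_{y_2}^{-1})$ on the open cover of the product. Both proofs close iv) with Ellis's theorem, and your caveat that a final topology need not commute with products (so joint continuity must be checked chart-by-chart) is exactly the right technical point. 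What your route buys is that the transfer of local properties becomes formal and the element-chasing with $ab^{-1}x^{-1}V_x$ disappears; what the paper's route buys is an explicit translation-invariant neighbourhood base at the identity, which plugs directly into the standard machinery for (para/semi)topological group topologies and is reused elsewhere in the paper.
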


\begin{proof}Let $x$ be a fixed element in $S$, being $S$ a cancellative commutative semitopological semigroup with open shifts, and put $\mathcal{B}=\{x^{-1}V:V\in N_x^{(S)}\}$. We will prove that $\{gU: U\in \mathcal{B}, g\in \langle S\rangle\}$ is a base for a topology of semitopological group in $\langle S\rangle$, for it we will prove the conditions 1, 3 an 4 given in \cite[Page 93]{Ravsky}. It is easy to prove the conditions 1 and 4, let us see 3. Let $V_x \in N_x^{(S)}$  and let $t\in x^{-1}V_x$, from the fact that $\langle S\rangle =SS^{-1}$, we have that $t=ab^{-1}$, where $a,b\in S$,  thus $ax\in bV_x$. Given that the shifts in $S$ are open and continuous  we can   find $W_x\in N_x^{(S)}$ such that $aW_x \subseteq  b V_x$, then $t(x^{-1}W_x)\subseteq x^{-1}V_x$ and condition 3 holds. If $\tau$ is the  topology generated by $\mathcal{B}$, then $(\langle S\rangle, \tau)$ is a semitopological group, moreover, since $S\in N_{x}$, $S=x(x^{-1}S)$, we have that $S$ is open in $(\langle S\rangle, \tau)$.\\We will prove that $\{gx^{-1}V: V\in N_x^{(S)}\}$ is a local base at $g$, for every $g\in \langle S\rangle$. Indeed let $U$ be an open set in $(\langle S\rangle, \tau)$ and $g\in U$, then there is $y\in \langle S\rangle$ such that $g\in yx^{-1}V\subseteq U$. There are $a,b, s,t\in S$ such that $y=ab^{-1}$ and $g=ts^{-1}$, therefore $ts^{-1}\in ab^{-1}x^{-1}V_x\subseteq U$, so that $tbx\in asV_x\subseteq bxsU$, hence there is $W_x$ open in  $S$ satisfying  $tbW_x\subseteq asV_x\subseteq bxs U$, therefore  $g(x^{-1}W_x)\subseteq U$, this implies that  $\{gx^{-1}V: V\in N_x ^{S}\}$ is local base at $g$.
\\Let us see that $S$ is a subspace of $(\langle S\rangle, \tau)$. Indeed, let $U$ be an open set in $S$ and let $s\in U$ be, then $xs\in xU$, there is $V\in N_x^{(S)}$ such that $sV\subseteq xU$, this is equivalent to saying that $s(x^{-1}V)\subseteq U$, so that $s\in (s(x^{-1}V))\cap S\subseteq U$, therefore $U$ is open in the topology of subspace of $S$. Reciprocally, let $U$ be an  open set  in $(\langle S\rangle, \tau)$ and $s\in U\cap S$, we can find $W_x \in N_x^{S}$  and $U_s\in N_s^{(S)}$,  such that $s\in sx^{-1}W_x\subseteq U$, $xU_s\subseteq sW_x \subseteq xU$, then $U_s\subseteq U$ and therefore $U_s\subseteq U \cap S$, this proves that $U\cap S$ is open in $S$.  From the fact that $\{gx^{-1}V: V\in N_x^{(S)}\}$ is a local base at $g$, it follows that if $S$ is 1-countable (locally compact), then $(\langle S\rangle, \tau)$ is 1-contable (resp. locally compact) as well. \\Let us suppose that $S$ is a topological semigroup and let us prove that $(\langle S\rangle, \tau)$  is a paratopological group, it can be concluded if we prove that  the condition 2 of \cite[Page 93]{Ravsky} holds for $\mathcal{B}$. Indeed, let $V_x\in N_x^{(S)}$, since $x^{2}\in  xV_x $ and the operation in $S$ is jointly continuous, there exists $W_x\in N_x^{(S)}$ such that $(W_x)^{2}\subseteq xV_x$, then $(x^{-1}W_x)^{2}\subseteq V_x$ and condition 2 holds, this proves that $(\langle S\rangle, \tau)$ is a paratopological group. Let us suppose that $S$ es $T_2$ and let us see $(\langle S\rangle, \tau)$ is $T_2$, indeed let $y,z\in \langle S\rangle$, $z\neq y$, then  there are $a,b,c,d\in S$ such that $z=ab^{-1}$ and $y=cd^{-1}$, so that $ad\neq bc$, by fact that $S$ is $T_2$, we can obtain $V_{ad}\in N_{ad}^{(S)}$ and $V_{bc}\in N_{bc}^{(S)}$. Note that $((bd)^{-1}V_{ad})\cap ((bd)^{-1}V_{bc})=\emptyset$, also,  $(bd)^{-1}V_{ad})\in N_z^{(\langle S\rangle)}$ and   $(bd)^{-1}V_{bc})\in N_y^{(\langle S\rangle)}$, that is to say $(\langle S\rangle, \tau)$  es $T_2$. Finally, if $S$ is locally compact and $T_2$, $(\langle S\rangle, \tau)$ is a semitopological group locally compact and $T_2$, by Elii's Theorem $(\langle S\rangle, \tau)$ is a topological group.\end{proof}

\noindent From the  item $ii)$ of the Proposition \ref{1610} and the Proposition \ref{16101}, we have the following result.

\begin{coro}\label{16102}If $S$ is a cancellative commutative locally compact Hausdorff semitopological semigroup with open shifts, then $S$ has continuous division.
\end{coro}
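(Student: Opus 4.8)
The plan is simply to chain Propositions \ref{1610} and \ref{16101}, so the proof will be short. First I would invoke Proposition \ref{1610}(iv): since $S$ is cancellative, commutative, Hausdorff, locally compact and has open shifts, there is a topology $\tau$ on $\langle S\rangle$ for which $(\langle S\rangle,\tau)$ is a locally compact Hausdorff topological group, and — by the construction inside the proof of that proposition, where $S$ is identified with $\iota(S)$ — $S$ sits inside $(\langle S\rangle,\tau)$ as a topological subspace that is moreover open. In particular $S$ is an open subsemigroup of the topological group $(\langle S\rangle,\tau)$.

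Next I would observe that, before applying Proposition \ref{16101}, one should check that continuous division is even meaningful for $S$, i.e. that $S$ is a topological semigroup and not merely a semitopological one. This is where item (ii) of Proposition \ref{1610} enters: $(\langle S\rangle,\tau)$, being a topological group, is in particular a paratopological group, so the ``only if'' direction of Proposition \ref{1610}(ii) yields that $S$ is a topological semigroup. Now Proposition \ref{16101} applies verbatim to the open subsemigroup $S$ of the topological group $(\langle S\rangle,\tau)$ and gives that $S$ has continuous division, which is exactly the assertion.

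There is no genuine obstacle here; the work is entirely bookkeeping. The only points that need a moment's care are that the hypotheses demanded by Proposition \ref{1610}(iv) coincide precisely with those assumed in the corollary, and that the semigroup $S$ appearing in Proposition \ref{1610} is literally our $S$ realized inside $\langle S\rangle$ via the embedding $\iota$, so that the phrase ``open subsemigroup of a topological group'' in Proposition \ref{16101} can be read off directly with no separate transfer-of-structure argument. (If one preferred not to cite Proposition \ref{16101}, one could instead repeat its proof directly inside $(\langle S\rangle,\tau)$: given $x,y\in S$ and an open $V\ni y$ in $S$, openness of $S$ makes $V$ open in $\langle S\rangle$, and continuity of the group operations produces open $K\ni x$, $M\ni xy$ with $K^{-1}M\subseteq V$; intersecting with $S$ gives the required $U,W$. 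But invoking Proposition \ref{16101} is cleaner.)
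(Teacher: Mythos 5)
Your proof is correct and follows exactly the route the paper intends: the remark preceding the corollary states that it follows from Proposition \ref{1610} (the embedding of $S$ as an open subsemigroup of the locally compact Hausdorff topological group $(\langle S\rangle,\tau)$, via item (iv)) together with Proposition \ref{16101}. Your additional care about $S$ being a topological subspace of $(\langle S\rangle,\tau)$ and about item (ii) is sound bookkeeping that the paper leaves implicit.
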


\begin{coro} Every cancellative commutative locally compact connected Hausdorff topological monoid with open shifts is a topological group.
\end{coro}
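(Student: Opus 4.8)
The plan is to realize $S$ as an open subsemigroup of a topological group and then use connectedness to force it to be the whole group. Since $S$ is a cancellative commutative locally compact Hausdorff topological monoid with open shifts, Corollary \ref{16102} gives that $S$ has continuous division, so by Proposition \ref{16103} the group $G:=\langle S\rangle^{\ast}$ is a Hausdorff topological group, $\iota\colon S\to\iota(S)$ is a homeomorphism, and $T:=\iota(S)$ is open in $G$. (Alternatively, Proposition \ref{1610}(iv) may be invoked directly.) Thus $T$ is an open subsemigroup of $G$; it is connected, being homeomorphic to $S$; it contains the identity $e$ of $G$, because $\iota$ is a monoid homomorphism (so $\iota$ sends the identity of $S$ to $\pi(a,a)=e$); and $G=TT^{-1}$, since every element of $\langle S\rangle$ has the form $xy^{-1}$ with $x,y\in S$.

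Next I would show that $T$ is a subgroup of $G$. Put $H:=T\cap T^{-1}$. Then $e\in H$; $H$ is closed under the operation, since for $a,b\in H$ we have $ab\in T$ ($T$ being a subsemigroup) and $ab=(b^{-1}a^{-1})^{-1}\in T^{-1}$ (because $a^{-1},b^{-1}\in T$ and hence $b^{-1}a^{-1}\in T$); and $H$ is closed under inversion. So $H$ is a subgroup of $G$, and it is open because $T$ is open and inversion is a homeomorphism of $G$; being an open subgroup, $H$ is also closed in $G$. Hence $H=H\cap T$ is a nonempty clopen subset of the connected space $T$, which forces $H=T$; that is, $T\subseteq T^{-1}$, so $T=T^{-1}$ and $T$ is indeed a subgroup of $G$.

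It now follows that $G=TT^{-1}=TT=T$ (a subsemigroup containing $e$ satisfies $TT=T$), so $T=G$. Since $\iota\colon S\to T=G$ is both a homeomorphism and an isomorphism of monoids and $G$ is a topological group, $S$ is a topological group. The substantive part of the argument is the short, standard fact about open subsemigroups of topological groups in the middle paragraph; everything else is bookkeeping with the cited results, and one should mainly be careful to check that the hypotheses of Corollary \ref{16102} and Proposition \ref{16103} are met (this is where the monoid structure, giving $e\in T$, and local compactness, giving continuous division, are used). A variant replaces the use of connectedness of $T$ by that of $G$: the representation $G=\bigcup_{s\in S}\iota(s)T^{-1}$ exhibits $G$ as a union of connected sets all containing $e$, so $G$ is connected, and then the open subgroup $T\cap T^{-1}$ must equal $G$.
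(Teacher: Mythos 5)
Your proof is correct and follows essentially the same route as the paper: embed $S$ as an open connected subsemigroup of the topological group $\langle S\rangle$ (via Proposition \ref{1610}, or via Corollary \ref{16102} and Proposition \ref{16103}) and then use the standard fact that an open subgroup is clopen together with connectedness of $S$. The only cosmetic difference is the choice of open subgroup: you take $\iota(S)\cap\iota(S)^{-1}$, whereas the paper takes $\bigcup_{n\in\mathbb{N}}V^{n}$ for a symmetric neighborhood $V$ of the identity contained in $S$.
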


\begin{proof}
Let $S$ be a cancellative commutative locally compact connected Hausdorff and  let $\tau$  be the topology given  in the Proposition \ref{1610}. Let $U\in N_{e_S}^{(S)}$, since  $e_S=e_{\langle S\rangle}$ and  $(\langle S\rangle, \tau)$ is a topological group, there is $V\in N_{e_S}^{(\langle S\rangle)}$ satisfying $V^{-1}=V$ and $V\subseteq U$, then $\bigcup_{n\in \mathbb{N}}V^{n}\subseteq \bigcup_{n\in \mathbb{N}}U^{n}$. But $\bigcup_{n\in \mathbb{N}}V^{n}$ is an open subgroup of  $(\langle S\rangle, \tau)$ and therefore is closed in $S$, the connectedness of $S$ implies that $\bigcup_{n\in \mathbb{N}}V^{n}=S$ and $S$ is a topological group.
\end{proof}

\noindent The following theorem tells us that every cancellative commutative locally compact Hausdorff topological semigroup  with open shifts can be embedded as an open semigroup into the locally compact Hausdorff topological group, $\langle S \rangle ^{\ast}$. 

\begin{theorem} \label{1710}Let $S$ be a cancellative, commutative topological semigroup with open shifts. If $S$ is Hausdorff and locally compact, then so is $ \langle S\rangle ^{\ast}$. Moreover $\iota\colon S\longrightarrow \iota(S)$ is an homeomorphism and $\iota(S)$ is open in $\langle S\rangle^{\ast}$.
\end{theorem}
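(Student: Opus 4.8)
The plan is to combine the machinery already assembled: first deduce that $S$ has continuous division, then feed this into Proposition~\ref{16103} to obtain the topological group structure together with the embedding statements, and finally upgrade to local compactness by a homogeneity argument.

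First I would note that $S$ meets the hypotheses of Corollary~\ref{16102}: it is a cancellative commutative locally compact Hausdorff semitopological semigroup with open shifts (being even a topological semigroup). Hence $S$ has continuous division. With this in hand, Proposition~\ref{16103} applies verbatim and delivers at once that $\langle S\rangle^{\ast}$ is a Hausdorff topological group, that the quotient map $\pi$ is open, that $\iota\colon S\longrightarrow\iota(S)$ is a homeomorphism, and that $\iota(S)$ is open in $\langle S\rangle^{\ast}$. This already establishes every assertion of the theorem except the local compactness of $\langle S\rangle^{\ast}$.

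For that remaining point I would argue as follows. Since $\iota$ is a homeomorphism onto $\iota(S)$ and $S$ is locally compact, $\iota(S)$ is locally compact; pick any $p\in\iota(S)$ (possible as $S\neq\emptyset$) and a compact neighborhood $C$ of $p$ in $\iota(S)$, say $p\in W\subseteq C$ with $W$ open in $\iota(S)$. Because $\iota(S)$ is open in $\langle S\rangle^{\ast}$, the set $W$ is open in $\langle S\rangle^{\ast}$, so $C$ is a compact neighborhood of $p$ in $\langle S\rangle^{\ast}$. Now for an arbitrary $g\in\langle S\rangle^{\ast}$ the left translation $z\mapsto gp^{-1}z$ is a self-homeomorphism of the topological group $\langle S\rangle^{\ast}$ carrying $p$ to $g$, whence $gp^{-1}C$ is a compact neighborhood of $g$. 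Thus every point of $\langle S\rangle^{\ast}$ has a compact neighborhood, i.e. $\langle S\rangle^{\ast}$ is locally compact, which finishes the proof.

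I expect the argument to be essentially bookkeeping: the genuine content is already packaged in Corollary~\ref{16102} and Proposition~\ref{16103}. The only spot that needs a small idea is the transfer of local compactness from the open subspace $\iota(S)$ to the whole group, and even there the standard homogeneity trick for topological groups suffices; the one thing to be careful about is that $\iota(S)$ need not contain the identity of $\langle S\rangle^{\ast}$, which is why one translates by $gp^{-1}$ rather than by $g$ alone.
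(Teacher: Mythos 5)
Your proof is correct, but the way you obtain local compactness differs from the paper. The paper does not use the embedding at all for that step: it observes that $S\times S$ is locally compact and Hausdorff and that, by Proposition~\ref{16103}, the quotient map $\pi\colon S\times S\longrightarrow\langle S\rangle^{\ast}$ is continuous, open and onto, so $\langle S\rangle^{\ast}$ is locally compact as an open continuous image of a locally compact space; only afterwards does it invoke Corollary~\ref{16102} and the ``furthermore'' clause of Proposition~\ref{16103} to get the homeomorphism and the openness of $\iota(S)$. You instead derive the embedding first and then propagate local compactness from the open locally compact subset $\iota(S)$ to the whole group by translating a compact neighborhood of a point $p\in\iota(S)$ by $gp^{-1}$. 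Both routes are sound. The paper's has the small structural advantage that local compactness of $\langle S\rangle^{\ast}$ is established independently of continuous division, so it would survive even if the embedding clause failed; yours has the advantage of being a completely standard, self-contained homogeneity argument, and you are right to flag that $\iota(S)$ need not contain the identity, which is exactly why the translation must be by $gp^{-1}$ rather than by $g$. Your appeal to Corollary~\ref{16102} to get continuous division is the same step the paper takes.
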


\begin{proof}Since $S$ is locally compact and Hausdorff topological semigroup, so is $S\times S$. By virtue the Proposition \ref{16103}, $\pi\colon S\times S \longrightarrow \langle S\rangle^{\ast}$ is open and $\langle S\rangle ^{\ast}$ is Hausdorff, hence $\langle S\rangle^{\ast}$ is locally compact Hausdorff topological group.  From Corollary \ref{16102} it follows that $S$ has continuous division, therefore the Proposition \ref{16103} guarantees that $\iota\colon S \longrightarrow \iota (S)$ is a homeomorphism and $\iota(S)$ is open in $\langle S\rangle^{\ast}$.
\end{proof}

\noindent It is well known that every pseudocompact  Tychonoff topolo\-gical group can be embedded as a subgroup dense into a compact topological group (see \cite[Theorem 2.3.2]{pseudo}). The following theorem presents an analogue result in cancellative commutative topological semigroups, where also of the pseudocompactness, it is  required the local compactness.

\begin{theorem}\label{23}If $S$ is a cancellative commutative locally compact pseudocompact Hausdorff topological semigroups with open shifts, then $S$ is an open  dense subsemigroup of  $\langle S\rangle^{\ast}$ and $\langle S \rangle^{\ast}$ is a compact topological group.
\end{theorem}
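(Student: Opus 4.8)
The strategy is to transport $S$ into the group furnished by Theorem~\ref{1710}, prove that this group is \emph{compactly generated}, use the feeble compactness of $S$ to trap $S$ inside a compact set, and finish with Wallace's theorem on compact cancellative semigroups. To begin, by Theorem~\ref{1710} the space $G:=\langle S\rangle^{\ast}$ is a locally compact Hausdorff topological group, $\iota\colon S\to\iota(S)$ is a homeomorphism onto an open subsemigroup of $G$, and from the construction of $\langle S\rangle$ one has $G=\iota(S)\iota(S)^{-1}$, so $\iota(S)$ generates $G$ as a group; I identify $S$ with $\iota(S)$. Since $S$ is locally compact Hausdorff it is Tychonoff, hence feebly compact by pseudocompactness, and $G$ is abelian because it is generated by the commutative $S$. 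The openness assertion of the theorem is then already in hand, so it remains to prove that $G$ is compact and that $S$ is dense in it.

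Next I would establish that $G$ is compactly generated. Pick a compact symmetric neighbourhood $V$ of $e$ in $G$; then $H:=\bigcup_{n\ge1}V^{n}=\langle V\rangle$ is an open (hence closed) subgroup, so $G/H$ is discrete. The image of $S$ in $G/H$ is a feebly compact discrete space, hence finite, and it is a subsemigroup that generates the group $G/H$; a finite cancellative semigroup is a group, so this image is all of $G/H$, i.e. $[G:H]<\infty$. Adjoining to $V$ finitely many coset representatives and their inverses produces a compact symmetric neighbourhood $V'$ of $e$ with $\langle V'\rangle=G$, and since $e$ lies in the interior of $V'$ one gets $(V')^{n}\subseteq \mathrm{Int}((V')^{n+1})$, so the sets $W_{n}:=\mathrm{Int}((V')^{n+1})$ form an increasing countable open cover of $G$ with each $\overline{W_{n}}$ contained in the compact set $(V')^{n+1}$. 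Now $\{S\cap W_{n}\}_{n\ge1}$ is an increasing countable open cover of the feebly compact space $S$, so (by the standard characterisation: a feebly compact space cannot be exhausted by an increasing sequence of open sets none of which is dense) some $S\cap W_{n}$ is dense in $S$; hence $S\subseteq\overline{S\cap W_{n}}\subseteq\overline{W_{n}}$, which is compact, and therefore $\overline S$ (closure in $G$) is compact.

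To conclude: $\overline S$ with the multiplication inherited from $G$ is a compact Hausdorff cancellative topological semigroup, hence a topological group by Wallace's theorem (see \cite[Theorem 2.5.2]{Ar}); being a subgroup of $G$ containing $S$, it contains the subgroup generated by $S$, which is all of $G$, so $\overline S=G$. Thus $G=\langle S\rangle^{\ast}$ is compact and $S$ is dense in it, as required. The genuinely delicate point is obtaining compactness of $G$ at all: feeble compactness of $S$ does not transfer to the ambient group, and a locally compact feebly compact space need not be compact, so one cannot simply quote \cite[Theorem 2.3.2]{pseudo}; the compact-generation step is precisely what makes the feeble compactness of $S$ usable. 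A less self-contained alternative would be to show first that $S\times S$ is pseudocompact (exploiting that $S$ is simultaneously locally compact and pseudocompact), deduce that $G=\pi(S\times S)$ is pseudocompact, and then combine \cite[Theorem 2.3.2]{pseudo} with the completeness of locally compact groups — but even then Wallace's theorem is still needed for the density of $S$.
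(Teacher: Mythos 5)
Your proof is correct, but it takes a genuinely different route from the paper's. The paper obtains compactness of $\langle S\rangle^{\ast}$ ``from above'': it uses \cite[Theorem 3.10.26]{En} (local compactness plus pseudocompactness of $S$ gives pseudocompactness of $S\times S$) to conclude that $\langle S\rangle^{\ast}=\pi(S\times S)$ is pseudocompact, then invokes the Comfort--Ross theorem \cite[Theorem 2.3.2]{pseudo} to embed $\langle S\rangle^{\ast}$ densely in the compact group $\beta\langle S\rangle^{\ast}$, and finally uses local compactness to make $\langle S\rangle^{\ast}$ open, hence closed, hence all of $\beta\langle S\rangle^{\ast}$ --- this is precisely the ``less self-contained alternative'' you sketch in your closing remarks. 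Your main argument instead works ``from below'': the finiteness of the (feebly compact, discrete) image of $S$ in $G/H$ shows $G$ is compactly generated, the exhaustion $G=\bigcup_n \mathrm{Int}\bigl((V')^{n+1}\bigr)$ together with the countable-open-cover characterization of feeble compactness traps $S$ inside a single compact set, and then $\overline{S}$ is compact. Both proofs finish with the same appeal to \cite[Theorem 2.5.2]{Ar} applied to the compact cancellative semigroup $\overline{S}$; the paper deduces density via the finite cover $\langle S\rangle^{\ast}=\bigcup_{i=1}^{n}Ss_i^{-1}$ and $\overline{S}s_i^{-1}=\overline{S}$, whereas you observe more directly that $\overline{S}$ is a subgroup containing $S$ and hence contains $SS^{-1}=G$ --- your phrasing of that last step is cleaner. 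What your route buys is self-containedness: it avoids both the product theorem for pseudocompact spaces and the Comfort--Ross theorem, using only elementary locally-compact-group facts plus the standard feeble-compactness characterization (which you should cite explicitly, e.g.\ from \cite{pseudo}, since the paper only quotes \cite[Theorem 1.1.3]{pseudo} for the equivalence with pseudocompactness). What the paper's route buys is brevity, at the cost of two substantial external results. One small point of care that you handled correctly but implicitly: the identity and inverses of the group $\overline{S}$ coincide with those of $G$ (by cancellation in $G$), which is what legitimizes calling $\overline{S}$ a subgroup of $G$.
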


\begin{proof}
Since $S$ is a locally compact pseudocompact space, \cite[Theorem 3.10.26]{En} implies that $S\times S$ is pseudocompact. From the fact that $\langle S\rangle^{\ast}$ is a continuos image of $S\times S$, we have that   $\langle S\rangle^{\ast}$ is pseudompact, therefore the  C$\check{e}$ch-Stone compactification, $\beta \langle S\rangle^{\ast} ,$ is a topological group containing a $\langle S \rangle^{\ast}$ as dense subgroup. \cite[Theorem 3.3.9]{En} guarantees that $\langle S \rangle^{\ast}$ is an open subgroup of $\beta S$, therefore it  is also closed. By the density of $\langle S\rangle^{\ast}$, $\langle S\rangle^{\ast}=\beta
\langle S\rangle^{\ast}$, that it to say, $\langle S\rangle^{\ast}$ is a compact topological group. Since $\langle S\rangle^{\ast}$ is compact, $S$ is open in $\langle S\rangle^{\ast}$ and $\langle S\rangle^{\ast}=SS^{-1}$, there are $s_1 , s_2..., s_n$ in $S$ such that $\langle S  \rangle^{\ast}=\bigcup_{i=1}^{n}Ss_i ^{-1}$. $Cl_{\langle S\rangle^{\ast}}(S)$ is a compact Hausdorff cancellative semigroup, then by \cite[Theorem 2.5.2]{Ar}, $Cl_{\langle S\rangle^{\ast}}(S)$ is a topological group, therefore $s_{i}^{-1}\in Cl_{\langle S\rangle^{\ast}}(S)$ for every $i\in \{1,2,3...,n\}$, hence $(Cl_{\langle S\rangle^{\ast}}(S))s_{i}^{-1}=Cl_{\langle S\rangle^{\ast}}(S)$ for every $i\in \{1,2,3...,n\}$. Since each shift in $\langle S\rangle^{\ast}$ is a homeomorphism, we have that $\langle S\rangle^{\ast}= Cl_{\langle S\rangle^{\ast}}(\bigcup_{i=1}^{n} r_{s_{i}^{-1}}(S) )=\bigcup_{i=1}^{n}Cl_{\langle S\rangle^{\ast}}(r_{s_{i}^{-1}}(S))=\bigcup_{i=1}^{n} r_{s_i ^{-1}}(Cl_{\langle S\rangle^{\ast}}(S))=\bigcup_{i=1}^{n}( Cl_{\langle S\rangle^{\ast}}(S))s_{i}^{-1} =\bigcup_{i=1}^{n} Cl_{\langle S\rangle^{\ast}}(S)= Cl_{\langle S\rangle^{\ast}}(S)$, that it to say, $S$ is dense in $\langle S \rangle^{\ast}$.
\end{proof}

\noindent We obtain the following corollary.

\begin{coro} \label{18021}The closure of any  subsemigorup of a cancellative commutative locally compact pseudocompact Hausdorff topological semigroup with open shifts can be embedded as a dense open subsemigroup into a compact Hausdorff topological group.
\end{coro}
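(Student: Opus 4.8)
The plan is to reduce the statement to Theorem~\ref{23} applied to a suitable ambient semigroup. Let $S$ be a cancellative commutative locally compact pseudocompact Hausdorff topological semigroup with open shifts, and let $T\subseteq S$ be a subsemigroup. First I would pass to $\overline{T}=Cl_S(T)$; this is again a cancellative commutative subsemigroup of $S$ (cancellativity is inherited by subsemigroups, and closure of a subsemigroup is a subsemigroup by joint continuity of the multiplication), and it is Hausdorff as a subspace. The point of closing up is that $\overline{T}$ is then a \emph{closed} subspace of the pseudocompact space $S$; but a closed subspace of a pseudocompact space need not be pseudocompact in general, so here one uses local compactness: a closed subspace of a locally compact Hausdorff space is locally compact, and by Theorem~\ref{23} the ambient $S$ sits as an open dense subsemigroup of the compact group $\langle S\rangle^\ast$, so $\overline{T}$ is a closed (hence compact) subset of that compact group, in particular feebly compact/pseudocompact. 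So $\overline{T}$ is itself a cancellative commutative locally compact pseudocompact Hausdorff topological semigroup.

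The remaining obstruction is that Theorem~\ref{23} also requires \emph{open shifts}, and a subsemigroup of a semigroup with open shifts need not inherit them. Here is where I would again exploit the embedding $S\hookrightarrow\langle S\rangle^\ast$ into a compact topological group: inside a topological group every shift is a homeomorphism, so for $a\in\overline{T}$ and $U$ open in $\overline{T}$, writing $U=V\cap\overline{T}$ with $V$ open in $\langle S\rangle^\ast$, we get $a U=(aV)\cap(a\overline{T})=(aV)\cap\overline{T}$ since $a\overline{T}\subseteq\overline{T}$ — wait, that inclusion is only one direction, so this needs the sharper fact that $\overline{T}$ is already a group. That is exactly what Theorem~\ref{23} (via \cite[Theorem 2.5.2]{Ar}, as used in its proof) gives: $\overline{T}$, being compact Hausdorff cancellative, \textbf{is a topological group}, and hence has open shifts automatically (shifts are homeomorphisms). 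So $\overline{T}$ satisfies \emph{all} the hypotheses of Theorem~\ref{23}.

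With that in hand the conclusion is immediate: apply Theorem~\ref{23} to the semigroup $\overline{T}$ in place of $S$. It yields that $\overline{T}$ is an open dense subsemigroup of $\langle\overline{T}\rangle^\ast$ and that $\langle\overline{T}\rangle^\ast$ is a compact topological group, which by density and openness of $\overline{T}$ in fact forces $\langle\overline{T}\rangle^\ast=Cl_{\langle\overline{T}\rangle^\ast}(\overline{T})=\overline{T}$; but more to the point of the stated corollary, $\overline{T}=Cl_S(T)$ is embedded (via $\iota$, a homeomorphism onto its image by Theorem~\ref{1710}) as a dense open subsemigroup of the compact Hausdorff topological group $\langle\overline{T}\rangle^\ast$, which is exactly the assertion.

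The step I expect to be the main obstacle is the verification that $\overline{T}$ inherits open shifts; the honest route, as sketched, is not to verify it directly from $S$ but to first observe that $\overline{T}$ is compact Hausdorff cancellative and invoke \cite[Theorem 2.5.2]{Ar} to conclude $\overline{T}$ is a topological group, after which open shifts come for free. One should double-check that the compactness of $\overline{T}$ used there really is available — it is, because $\overline{T}$ is a closed subset of the compact group $\langle S\rangle^\ast$ produced by Theorem~\ref{23}, so $\overline{T}$ is genuinely compact, not merely pseudocompact.
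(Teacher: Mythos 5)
There is a genuine gap at the central step. You assert that $\overline{T}=Cl_{S}(T)$ is a closed, hence compact, subset of the compact group $\langle S\rangle^{\ast}$. But $\overline{T}$ is only closed in $S$, and Theorem \ref{23} makes $S$ an \emph{open dense} subsemigroup of $\langle S\rangle^{\ast}$, not a closed one; a set closed in an open dense subspace of a compact space need not be closed in the whole space, since its closure in $\langle S\rangle^{\ast}$ may meet $\langle S\rangle^{\ast}\setminus S$. Already for $T=S$ your reasoning would conclude that $S$ itself is compact, which is not what Theorem \ref{23} asserts. Since everything downstream rests on this compactness --- $\overline{T}$ being a compact Hausdorff cancellative semigroup, hence a topological group by \cite[Theorem 2.5.2]{Ar}, hence having open shifts, hence satisfying the hypotheses of Theorem \ref{23} --- the argument does not go through as written. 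Note also that your endpoint, $\langle\overline{T}\rangle^{\ast}=\overline{T}$, would make $Cl_{S}(T)$ itself the compact group, a strictly stronger claim than the corollary's, which only places $Cl_{S}(T)$ as a dense open (possibly proper) subsemigroup of a compact group.

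The repair is essentially the paper's own proof, and it is shorter than your route: take the closure in the ambient group rather than in $S$. Put $H:=Cl_{\langle S\rangle^{\ast}}(K)$. This set genuinely is closed in the compact group $\langle S\rangle^{\ast}$, hence is a compact Hausdorff cancellative topological semigroup, hence a topological group by \cite[Theorem 2.5.2]{Ar}. Then $Cl_{S}(K)=H\cap S$ is open in $H$ because $S$ is open in $\langle S\rangle^{\ast}$, and it is dense in $H$ because it contains $K$, which is dense in $H$ by construction. No verification of open shifts for $Cl_{S}(K)$, and no second application of Theorem \ref{23}, is needed.
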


\begin{proof}
Let $S$ be a cancellative commutative locally compact pseudocompact Hausdorff topological semigroup with open shifts and let $K$ be a subsemigorup of $S$. By Theorem \ref{23}, $S$ is an open subsemigroup of $\langle S\rangle^{\ast}$, since $Cl_{S}(K)=Cl_{\langle S\rangle^{\ast}}(K)\cap S$, we have that $Cl_{S}(K)$ is open in $Cl_{\langle S\rangle^{\ast}}(K)$. Now, $K$ is dense in $Cl_{\langle S\rangle^{\ast}}(K)$ and $K\subseteq Cl_{S}(K)\subseteq Cl_{\langle S\rangle^{\ast}}(K)$, this proves that $Cl_{S}(K)$ is dense en $Cl_{\langle S\rangle^{\ast}}(K)$, but  $Cl_{\langle S\rangle^{\ast}}(K)$ is a compact Hausdorff cancellative topological semigroup, so that it is a topological group by \cite[Theorem 2.5.2]{Ar}, and so we have finished the proof.
\end{proof}

\begin{prop}If $S$ is a cancellative commutative locally compact pseudocompact Hausdorff topological semigroup with open shifts, then $(\langle S\rangle^{\ast}, \iota)$ coincides with the $SAP$-compactification of $S$.
\end{prop}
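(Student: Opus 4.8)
The plan is to verify that $(\langle S\rangle^{\ast},\iota)$ satisfies the universal property defining the $SAP$-compactification of $S$; since that compactification is unique up to isomorphism of pairs, this proves the proposition. By Theorem~\ref{23}, $\langle S\rangle^{\ast}$ is a compact topological group and $\iota(S)$ is open and dense in it; by Proposition~\ref{16103}, $\langle S\rangle^{\ast}$ is Hausdorff; and by Theorem~\ref{1710}, $\iota\colon S\longrightarrow\langle S\rangle^{\ast}$ is a continuous homomorphism. So $(\langle S\rangle^{\ast},\iota)$ is a pair of the required type, and the task reduces to showing that every continuous homomorphism $h\colon S\longrightarrow K$ into a compact Hausdorff topological group $K$ factors as $h=h^{\ast}\circ\iota$ for a unique continuous homomorphism $h^{\ast}\colon\langle S\rangle^{\ast}\longrightarrow K$.

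To produce $h^{\ast}$, I would first pass to an abelian target. As $S$ is commutative, $h(S)$ is a commuting subset of $K$, so the subgroup $K'$ of $K$ generated by $h(S)$ is abelian and, with the subspace topology, is an abelian topological group. As recalled above, the abelian groups form a reflective subcategory of the cancellative commutative semigroups with reflection $S\longmapsto\langle S\rangle$; applying the universal property of this reflection to $h$, regarded as a homomorphism into $K'$, gives a unique group homomorphism $h^{\ast}\colon\langle S\rangle\longrightarrow K'\subseteq K$ with $h^{\ast}\circ\iota=h$, namely $h^{\ast}\bigl(\pi(x,y)\bigr)=h(x)h(y)^{-1}$.

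It remains to check that $h^{\ast}$ is continuous for the topology of $\langle S\rangle^{\ast}$. Since that topology is, by definition, the quotient topology induced by $\pi\colon S\times S\longrightarrow\langle S\rangle^{\ast}$, the map $h^{\ast}$ is continuous if and only if $h^{\ast}\circ\pi$ is; but $h^{\ast}\circ\pi$ sends $(x,y)$ to $h(x)h(y)^{-1}$, so it is the composite of $h\times h\colon S\times S\longrightarrow K\times K$ with the map $K\times K\longrightarrow K$, $(a,b)\longmapsto ab^{-1}$, both of which are continuous because $h$ is continuous and $K$ is a topological group. Hence $h^{\ast}$ is continuous. Uniqueness of $h^{\ast}$ is then immediate, because $\langle S\rangle=\iota(S)\,\iota(S)^{-1}$ forces any homomorphism out of $\langle S\rangle$ to be determined by its restriction to $\iota(S)$ (alternatively, $\iota(S)$ is dense in $\langle S\rangle^{\ast}$ and $K$ is Hausdorff). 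This gives the universal property, and the proposition follows.

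I expect the only step that is not purely formal to be the reduction to an abelian $K$: one must observe that the commutativity of $S$ forces $h(S)$, and hence the subgroup it generates, to be abelian, which is precisely what makes the reflection $\langle S\rangle$ — constructed inside the category of abelian groups — applicable to $h$. After that, the argument only uses Theorem~\ref{23} and the description of the topology of $\langle S\rangle^{\ast}$ as a quotient topology, so no further delicate point should arise.
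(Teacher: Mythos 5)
Your proposal is correct and follows essentially the same route as the paper: both verify the universal property by defining $h^{\ast}(\pi(x,y))=h(x)h(y)^{-1}$ via the abelian-group reflection of $S$, relying on Theorem~\ref{23} for $\langle S\rangle^{\ast}$ being a compact Hausdorff topological group. The only differences are minor: the paper obtains an abelian target by taking $\overline{h(S)}$ and invoking the compact-cancellative-semigroup theorem, whereas you take the subgroup of $K$ generated by $h(S)$; and you spell out the continuity of $h^{\ast}$ via the quotient topology and the uniqueness of $h^{\ast}$, both of which the paper merely asserts.
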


\begin{proof}
Let $S$ be a  cancellative commutative locally compact pseudocompact Hausdorff topological semigroup with open shifts and $f\colon S\longrightarrow G$ a continuous  homomorphism, being $G$ a Hausdorff compact topological  group. Since $S$ is commutative, $f(S)$ is a commutative subsemigroup of $G$, so that $\overline{f(S)}$ is a compact commutative cancellative topological semigroup, which is a topological group by \cite[Theorem 2.5.2]{Ar}. Let us define $f^{\ast}\colon \langle S\rangle^{\ast} \longrightarrow \overline{f(S)}$ by $f^{\ast}(xy^{-1})=f(x)(f(y))^{-1}$, $f^{\ast}$ is a continuous homomorphism and moreover $f^{\ast}\circ \iota =f$, this ends the proof.
\end{proof}

\noindent It is known that each pseudocompact Tychonoff paratopological group is a topological group (see  \cite[Theorem 2.6]{Rez}). The following theorem gives us a similar result  in cancellative commutative topological monoids with open shifts, but instead of group structure we have required the first axiom of countability.

\begin{theorem} \label{18022}Let $S$ be  a  cancellative commutative feebly compact topological monoid with open shifts satisfying the first axiom of countability. Then $\mathcal{C}_{r}(S)$ is a compact metrizable topological group. Moreover, the following statements hold:

\begin{itemize}
\item[i)]If $S$ is $T_{2}$, $S$ is a paratopological group.
\item[ii)]If $S$ is regular, $S$ is a compact metrizable topological group.
\end{itemize}
\end{theorem}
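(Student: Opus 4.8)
The first and main assertion is about $T:=\mathcal{C}_r(S)$ itself, so I would work with $T$ throughout: prove that $T$ is locally compact, apply Theorem~\ref{23} to it, and then identify $T$ with the compact group it produces. First, gather the relevant properties of $T$: by Proposition~\ref{0206}(i), $T$ is a cancellative commutative topological monoid with open shifts, with neutral element $e$; being an object of $\mathcal{C}_r$ it is regular, and since $T=\mathcal{C}_0(S_{sr})$ it is $T_0$, hence $T_2$ (even $T_3$), because a regular $T_0$ space is Hausdorff; by Corollary~\ref{0207}, $T$ is first countable; and since feeble compactness passes to continuous images while $\varphi_{(S,\mathcal{C}_r)}\colon S\to T$ is a continuous surjection, $T$ is feebly compact.

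The heart of the argument is to show that $T$ is \emph{locally compact}, and this is where feeble compactness and first countability must be combined with the quasi-homogeneity supplied by the open shifts. For every $a\in T$ the shift $l_a$ is an open continuous injection, hence a homeomorphism of $T$ onto the open set $aT$, so $T$ is homeomorphic to each of its open subsets $aT$; moreover cancellativity rigidifies the descending chain $T\supseteq aT\supseteq a^{2}T\supseteq\cdots$, since $a^{n+1}T=a^{n}T$ forces $e\in aT$. Starting from a decreasing neighbourhood base $\{U_n\}$ at $e$ with $\overline{U_{n+1}}\subseteq U_n$, and using that feeble compactness forces $\bigcap_n\overline{V_n}\neq\emptyset$ for every decreasing sequence of nonempty open sets $V_n$, the goal is to find some $n$ with $\overline{U_n}$ compact; translating by the shifts then yields local compactness at every point. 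I expect this to be the main obstacle: feeble compactness, even together with first countability and regularity, does not in general give local compactness (Mr\'owka-type spaces witness this), so the cancellative monoid structure must enter in an essential way.

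Granting local compactness, $T$ is locally compact Hausdorff, hence Tychonoff, so its feeble compactness coincides with pseudocompactness and Theorem~\ref{23} applies to $T$, exhibiting $T$ as an open, dense subsemigroup of the compact topological group $G:=\langle T\rangle^{\ast}$. Since $e\in T$ and $T$ is a submonoid of $G$, the symmetric set $H:=T\cap T^{-1}$ is an open subgroup of $G$, hence of finite index by compactness; as $H\subseteq T$ and $T$ is multiplicatively closed, $T$ is a union of left cosets of $H$, so the image $T/H$ is a submonoid — therefore a subgroup — of the finite group $G/H$, and it generates $G/H$ because $T$ generates $G$; hence $T/H=G/H$, i.e.\ $T=G$. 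Thus $\mathcal{C}_r(S)=T=G$ is a compact topological group, and being first countable it is metrizable by the Birkhoff--Kakutani theorem, which proves the first assertion.

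For (i), if $S$ is $T_2$ then $\mathcal{C}_r(S)=S_{sr}$ by Proposition~\ref{0206}(iv); since $S$ and $S_{sr}$ share the same semigroup operation and $\mathcal{C}_r(S)$ has just been shown to be a group, $S$ is a group, and being a topological semigroup it is a paratopological group. For (ii), if $S$ is regular then the regular open sets form a base for its topology, so $S=S_{sr}$, and $S$ is moreover $T_2$; hence Proposition~\ref{0206}(iv) gives $\mathcal{C}_r(S)=S_{sr}=S$, so $S$ is itself the compact metrizable topological group obtained above.
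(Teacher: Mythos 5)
Your reduction of items (i) and (ii) to the main assertion is fine, and the endgame (open dense subsemigroup of a compact group must be the whole group, Birkhoff--Kakutani for metrizability) would work. But the proposal has a genuine gap exactly where you flag it: you never prove that $T=\mathcal{C}_r(S)$ is locally compact, and without that you cannot invoke Theorem~\ref{23} (which requires local compactness in its hypotheses), nor even identify feeble compactness with pseudocompactness for $T$, since you have no route to complete regularity of $T$ other than through local compactness plus Hausdorffness. As you yourself observe, feeble compactness together with regularity and first countability does not yield local compactness in general, and you offer no argument that the cancellative monoid structure closes this hole --- the sketch with the chain $T\supseteq aT\supseteq a^2T\supseteq\cdots$ and the nested closures $\overline{U_{n+1}}\subseteq U_n$ is a statement of intent, not a proof. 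So the main assertion remains unproved.

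The paper closes this gap by a different route that never needs local compactness of $\mathcal{C}_r(S)$. It first embeds $S$ as an open submonoid of the paratopological group $G=\langle S\rangle$ with the topology $\tau$ of Proposition~\ref{1610}, then passes to $\mathcal{C}_r(G)$, which by Proposition~\ref{0206} is a \emph{regular paratopological group} containing $\mathcal{C}_r(S)$; the Banakh--Ravsky theorem (each regular paratopological group is completely regular) makes $\mathcal{C}_r(G)$ Tychonoff, hence $\mathcal{C}_r(S)$ is Tychonoff and its feeble compactness upgrades to pseudocompactness. The decisive external input is then the cited result that a pseudocompact subspace of a regular first-countable paratopological group is compact and metrizable; applied to $\mathcal{C}_r(S)\subseteq\mathcal{C}_r(G)$ this gives compactness directly, after which cancellativity and the Ellis-type theorem for compact Hausdorff cancellative topological semigroups yield the group structure. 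If you want to salvage your outline, you should replace the unproved local-compactness step with this embedding into a paratopological group and the quoted compactness criterion (or supply an actual proof of compactness of $\overline{U_n}$ for some $n$, which is the hard content you have skipped).
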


\begin{proof}
Let $S$ be a commutative cancellative topological monoid with open shifts and put $G=\langle S\rangle$. From  Proposition \ref{1610} we have that there is a topo\-logy $\tau$, such that $(G,\tau)$ is a paratopological group containing $S$ as an open monoid. It follows from Proposition \ref{0206} that $\mathcal{C}_r(G)$ is a regular paratopological group containing $\mathcal{C}_r(S)$, So \cite[Corollary 5]{B} implies that $\mathcal{C}_r(G)$ is Tychonoff, therefore so is $\mathcal{C}_r(S)$. Since $\mathcal{C}_{r}(S)$ is feebly compact and Tychonoff, it is pseudocompact. Then $\mathcal{C}_r(S)$  is a pseuducompact subspace of the regular first-countable paratopological group $\mathcal{C}_r(G)$, following 
\cite[Corollary 4.18]{P}, we have that $\mathcal{C}_r(S)$ is metrizable and compact. By Proposition \ref{0206} i), $\mathcal{C}_r(S)$ is cancelative, therefore \cite[Theorem 2.5.2]{Ar} implies that $\mathcal{C}_r(S)$ is a topological group. Now, if $S$ is $T_2$, $\mathcal{C}_r(S)=S_{sr}$, but $S$ and $S_{sr}$ coincide algebraically, thus $S$ is a paratopological group. If $S$ is regular, then $S=\mathcal{C}_r(S)$, this ends the proof.
\end{proof}

\noindent By \cite[Example 2.7.10]{pseudo}, there is a feebly compact Hausdorff 2-countable paratopological group that fails to be a compact topological group, therefore the regularity  in $ii)$ Theorem \ref{18022} cannot be weakened to the Hausdorff separation property.

\begin{ex}Let $\omega_1$ be the first  non countable ordinal, the space $[0, \omega_1)$  of ordinal numbers strictly less than $\omega_1$ with its order topology is regular first-countable  feebly compact space, but $[0, \omega_1)$ is not compact. Then  we can  see the importance of algebraic structure in the Theorem \ref{18022}. 
\end{ex}

\section{CELLULARITY OF  TOPOLOGICAL SEMIGROUPS}
\noindent Finally, we present some results about the  cellularity of topological semigroups.
\begin{theorem} \label{1802} Let $S$ be a cancellative, commutative Hausdorff locally compact $\sigma$-compact topological semigroup with open shifts. Then $S$ has countable cellularity. 
\end{theorem}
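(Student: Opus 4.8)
The plan is to reduce the statement to a known fact about topological groups via the embedding machinery developed in Section 3. Since $S$ is cancellative, commutative, Hausdorff, locally compact with open shifts, Theorem \ref{1710} gives that $\langle S\rangle^{\ast}$ is a locally compact Hausdorff topological group, that $\iota\colon S\to\iota(S)$ is a homeomorphism, and that $\iota(S)$ is open in $\langle S\rangle^{\ast}$. So I may as well identify $S$ with the open subsemigroup $\iota(S)$ of the locally compact Hausdorff topological group $G:=\langle S\rangle^{\ast}$.

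**Next I would** use $\sigma$-compactness of $S$ to control the size of $G$. Since $G=SS^{-1}$ and $S$ is $\sigma$-compact, write $S=\bigcup_{n\in\mathbb N}K_n$ with each $K_n$ compact; then $G=\bigcup_{n,m}K_nK_m^{-1}$, a countable union of compact sets, so $G$ is itself $\sigma$-compact. Now invoke the fact cited in the introduction (\cite[Corollary 2.3]{T3}) that every $\sigma$-compact paratopological group --- in particular every $\sigma$-compact topological group --- has countable cellularity; hence $c(G)=\aleph_0$.

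**Finally**, cellularity is monotone with respect to open subspaces: any cellular family in $S$ is, via the homeomorphism $\iota$, a cellular family of open subsets of the open subspace $\iota(S)\subseteq G$, and open subsets of $\iota(S)$ are open in $G$ because $\iota(S)$ is open in $G$. Therefore $c(S)=c(\iota(S))\le c(G)=\aleph_0$, and combined with the defining clause $c(S)\ge\aleph_0$ we get $c(S)=\aleph_0$, i.e. $S$ has countable cellularity.

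**The main obstacle** is essentially bookkeeping rather than a genuine difficulty: one must be sure that local compactness is really needed to pass from $S$ to the open embedding into a \emph{topological} group (Theorem \ref{1710} supplies exactly this, using Corollary \ref{16102} and Proposition \ref{16103}), and one must verify the elementary but crucial monotonicity $c(U)\le c(X)$ for $U$ open in $X$ --- which holds precisely because open subsets of an open set are open in the ambient space. If one wanted to avoid local compactness, $\sigma$-compactness plus feeble-compactness-type arguments would not by themselves yield the open embedding, so the hypotheses as stated are what makes the short route work.
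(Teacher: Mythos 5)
Your proposal is correct and follows essentially the same route as the paper: embed $S$ as an open subsemigroup of the locally compact Hausdorff topological group $\langle S\rangle^{\ast}$, observe that this group is $\sigma$-compact (the paper gets this by noting $\langle S\rangle^{\ast}$ is a continuous image of the $\sigma$-compact $S\times S$, while you write $G=\bigcup_{n,m}K_nK_m^{-1}$ --- both are immediate), apply \cite[Corollary 2.3]{T3}, and transfer countable cellularity back to the open subspace $S$.
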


\begin{proof}
Since $S\times S$ is locally compact and $\sigma$-compact, from Proposition \ref{16103} we have that $\langle S\rangle^{\ast}$ is locally compact, Hausdorff, $\sigma$-compact topological group, which has countable celluarity following \cite[Corollary 2.3]{T3}. Given that $S$ is open in $\langle S\rangle^{\ast}$, then $c(S)=c(\langle S\rangle^{\ast})=\aleph_0$.
\end{proof}

\noindent In the next corollary we give an analogous result to that of the proposition \ref{1802}, but without considering axioms of separation.

\begin{coro}Every $\sigma$-compact locally compact cancellative commutative topological monoid with open shifts has countable cellularity.
\end{coro}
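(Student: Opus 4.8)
The plan is to reduce the statement to Theorem~\ref{1802} by passing through the regular reflection $\mathcal{C}_r(S)$, exactly in the spirit of how Theorem~\ref{18022} disposes of separation axioms. First I would recall from Proposition~\ref{0206}~i) that if $S$ is a cancellative commutative topological monoid with open shifts, then $\mathcal{C}_r(S)$ is again a cancellative commutative (regular) topological monoid, and by Proposition~\ref{0206}~iii) one has $c(S)=c(\mathcal{C}_r(S))$. So it suffices to show that $\mathcal{C}_r(S)$ falls under Theorem~\ref{1802}, i.e. that it is Hausdorff, locally compact, $\sigma$-compact, and still has open shifts.

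The open-shifts property of $\mathcal{C}_r(S)$ should follow from the fact that $\varphi_{(S,\mathcal{C}_r)}$ is an open homomorphism (Proposition~\ref{0206}~i) combined with v), using $\mathcal{C}_r(S)=\mathcal{C}_0(S_{sr})$ and openness of $\varphi_{(S,\mathcal{C}_0)}$): a shift on $\mathcal{C}_r(S)$ is conjugate, via the open surjection $\varphi$, to a shift on $S$, so it carries open sets to open sets; this is the same mechanism invoked implicitly in the earlier results. Since $\varphi_{(S,\mathcal{C}_r)}$ is continuous, surjective and open, $\sigma$-compactness of $S$ passes to $\mathcal{C}_r(S)$ immediately (a countable union of compact sets maps onto a countable union of compact sets). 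For local compactness I would again use that $\varphi$ is open: the image of a compact neighbourhood of $x$ in $S$ is a neighbourhood of $\varphi(x)$ whose closure is contained in the (compact) image of that neighbourhood, so $\mathcal{C}_r(S)$ is locally compact. Regularity holds by construction of $\mathcal{C}_r$, and a regular locally compact space is Hausdorff precisely when it is $T_0$; a regular $T_0$ space is $T_3$, hence Hausdorff — and $\mathcal{C}_r(S)$ is regular, and being a reflection into $\mathcal{C}_r$ it is in particular $T_0$, so $\mathcal{C}_r(S)$ is Hausdorff.

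With $\mathcal{C}_r(S)$ now verified to be a cancellative commutative Hausdorff locally compact $\sigma$-compact topological semigroup with open shifts, Theorem~\ref{1802} gives $c(\mathcal{C}_r(S))=\aleph_0$, and Proposition~\ref{0206}~iii) transfers this back to $c(S)=\aleph_0$. The main obstacle I anticipate is the careful justification that $\mathcal{C}_r(S)$ retains \emph{local compactness} and \emph{open shifts}: these are not among the properties automatically preserved by an arbitrary quotient, and one must lean specifically on the openness of the reflection map $\varphi_{(S,\mathcal{C}_r)}$ (which itself rests on $\mathcal{C}_r(S)=\mathcal{C}_0(S_{sr})$ and the openness of $\varphi_{(S,\mathcal{C}_0)}$ from Proposition~\ref{0206}~v)) together with the identity $\mathcal{C}_r(A)=\varphi_{(S,\mathcal{C}_r)}(A)$ for open $A$ from Proposition~\ref{0206}~ii) to control neighbourhoods and their closures. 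Everything else is a routine application of the already-established machinery.
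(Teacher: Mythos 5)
Your overall strategy (push the hypotheses through a reflection so that Theorem~\ref{1802} applies, then transfer cellularity back via $c(S)=c(\mathcal{C}_r(S))$) is the right one, but there is a genuine gap at the point you yourself flag as the main obstacle: everything you need about $\mathcal{C}_r(S)$ --- local compactness and open shifts --- is made to rest on the claim that $\varphi_{(S,\mathcal{C}_r)}$ is an \emph{open} map, and that claim is not available. Proposition~\ref{0206}~v) asserts openness of $\varphi_{(S,\mathcal{C}_i)}$ only for $i\in\{0,1,2\}$, deliberately excluding $i=r$. Your proposed derivation --- factor $\varphi_{(S,\mathcal{C}_r)}$ as $S\to S_{sr}\to\mathcal{C}_0(S_{sr})$ and use openness of the second arrow --- does not close the gap, because the first arrow (the identity map onto the semiregularization) is continuous but not open in general: an open set of $S$ need not be a union of regular open sets, so it need not be open in $S_{sr}$. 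Proposition~\ref{0206}~ii) only tells you that $\varphi_{(S,\mathcal{C}_r)}(A)$ is the regular reflection of $A$ for $A$ open; it does not say this image is open in $\mathcal{C}_r(S)$. Without openness of the reflection map, the transfer of local compactness and of the open-shifts property to $\mathcal{C}_r(S)$ is unjustified. (Your $\sigma$-compactness and Hausdorffness steps are fine.)

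The paper's proof avoids this trap by routing through the Hausdorff reflection instead: $\varphi_{(S,\mathcal{C}_2)}$ \emph{is} open by Proposition~\ref{0206}~v), so $\mathcal{C}_2(S)$ is a locally compact, Hausdorff, $\sigma$-compact topological semigroup with open shifts; a locally compact Hausdorff space is automatically regular, hence $\mathcal{C}_2(S)$ already lies in $\mathcal{C}_r$ and therefore $\mathcal{C}_2(S)=\mathcal{C}_r(S)$. That single observation --- let local compactness plus Hausdorffness supply the regularity for free, rather than forcing regularity through the (possibly non-open) map $\varphi_{(S,\mathcal{C}_r)}$ --- is the missing idea; with it, Theorem~\ref{1802} applies to $\mathcal{C}_2(S)=\mathcal{C}_r(S)$ and Proposition~\ref{0206}~iii) finishes the argument exactly as you intended.
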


\begin{proof}
Let $S$ be a  $\sigma$-compact locally compact cancellative commutative topological monoid with open shifts. By Proposition \ref{0206} $\varphi_{(S, \mathcal{C}_2)}$ is open, therefore $\mathcal{C}_2(S)$ is locally compact  Hausdorff topological semigroup with open shifts, this implies that $\mathcal{C}_2(S)$ is regular, so that $\mathcal{C}_2(S)=\mathcal{C}_r(S)$. Since $\mathcal{C}_r(S)$ is cancellative, we can apply the Theorem \ref{1802} and the Proposition \ref{0206}  to conclude that $c(S)=c(\mathcal{C}_r(S))=\aleph_0$.
\end{proof}

\begin{coro}Every subsemigroup of a commutative cancellative locally compact pseudocompact Hausdorff topological semigroup with open shifts has countable cellularity.
\end{coro}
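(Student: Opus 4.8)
The plan is to reduce this statement to the fact, quoted in the introduction, that every subsemigroup of a precompact topological group has countable cellularity (\cite[Corollary 3.6]{T5}); the work of Section 3 has already manufactured exactly the ambient compact group we need. So let $S$ be a commutative cancellative locally compact pseudocompact Hausdorff topological semigroup with open shifts and let $K$ be a subsemigroup of $S$. First I would apply Theorem \ref{23}: it gives that $S$ is an open (dense) subsemigroup of $\langle S\rangle^{\ast}$ and that $\langle S\rangle^{\ast}$ is a compact topological group. The point to record is that this inclusion is genuinely topological — $\iota$ is a homeomorphism onto its image — so the topology $K$ inherits as a subspace of $S$ is the same as the one it inherits as a subspace of $\langle S\rangle^{\ast}$.

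Next I would observe that a compact Hausdorff topological group is in particular precompact, so $\langle S\rangle^{\ast}$ is a precompact topological group and $K$ is (up to homeomorphism) a subsemigroup of it. Invoking \cite[Corollary 3.6]{T5} then yields $c(K)=\aleph_0$ directly, which is the claim. An equally short alternative is to route through Corollary \ref{18021} rather than Theorem \ref{23}: that corollary already exhibits $Cl_S(K)$ as a dense open subsemigroup of some compact Hausdorff topological group $H$, and since $K\subseteq Cl_S(K)\subseteq H$, again $K$ is a subsemigroup of a precompact group and \cite[Corollary 3.6]{T5} applies.

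I do not expect a real obstacle here; the only thing needing care is the bookkeeping about subspace topologies — i.e.\ confirming that a subsemigroup of $S$ with its relative topology is literally a subsemigroup of the compact group $\langle S\rangle^{\ast}$ with the same relative topology — and that is immediate once one uses that $S$ sits inside $\langle S\rangle^{\ast}$ as a topological subspace via $\iota$. In effect, the "hard part" is simply recognising that Theorem \ref{23} (equivalently Corollary \ref{18021}) collapses the cellularity question to a single citation.
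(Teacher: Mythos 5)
Your proposal is correct, but it takes a different route from the paper's. The paper does not invoke \cite[Corollary 3.6]{T5} at all here: it applies Corollary \ref{18021} to realise $Cl_{S}(K)$ as a dense open subsemigroup of a compact Hausdorff topological group $G$, and then chains two elementary cellularity inequalities, $c(K)\leq c(Cl_{S}(K))$ (since $K$ is dense in $Cl_{S}(K)$, and a dense subspace has the same cellularity as the ambient space) and $c(Cl_{S}(K))\leq c(G)$ (since $Cl_{S}(K)$ is open in $G$, so any cellular family in it is one in $G$), finishing with the standard fact that compact groups have countable cellularity. You instead note that $K$ sits, with the correct subspace topology, inside the compact --- hence precompact --- group $\langle S\rangle^{\ast}$ furnished by Theorem \ref{23}, and let \cite[Corollary 3.6]{T5} do all the work. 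Your version is shorter and bypasses Corollary \ref{18021} entirely, at the cost of leaning on a stronger external theorem about arbitrary subsemigroups of precompact groups; the paper's version is more self-contained, using only its own embedding results plus two one-line cellularity observations. Your point about the subspace topology being preserved under $\iota$ is the right thing to check, and it is indeed guaranteed by Theorem \ref{1710}/Theorem \ref{23}. Both arguments are sound.
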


\begin{proof}
Let $S$ be a commutative cancellative locally compact pseudocompact Hausdorff topological semigroup with open shifts and let $K$ be a subsemigroup of $S$. By Corollary \ref{18021}, there exists a compact Hausdorff topolo\-gical group, $G$, containing $Cl_{S}(K)$ as an open semitopological group, therefore $c(K)\leq c(Cl_{S}(K)) \leq c(G)=\aleph_0$. 
\end{proof}

\noindent Since the compact topological groups has countable cellularity and $c(S)=c(\mathcal{C}_r(S))$ for every topological monoid with open shifts, the Theorem \ref{18022} implies the following corollary.

\begin{coro}Let $S$ be  a  cancellative commutative feebly compact topolo\-gical monoid with open shifts satisfying the first axiom of countability. Then $S$ has countable cellularity.
\end{coro}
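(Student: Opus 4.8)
The plan is to deduce this corollary directly from Theorem \ref{18022} together with item iii) of Proposition \ref{0206}. Let $S$ be a cancellative commutative feebly compact topological monoid with open shifts that is first countable. By Theorem \ref{18022}, the regular reflection $\mathcal{C}_r(S)$ is a compact metrizable topological group. Every compact topological group is known to have countable cellularity (it is a compact topological space, and in fact a dyadic compactum; alternatively, since it is metrizable here, it is even second countable, hence separable, hence of countable cellularity). So $c(\mathcal{C}_r(S)) = \aleph_0$.

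Next I would invoke Proposition \ref{0206} iii), which asserts that $c(S) = c(\mathcal{C}_i(S))$ for each $i \in \{0,1,2,3,r\}$, and in particular for $i = r$. Combining this with the previous paragraph gives $c(S) = c(\mathcal{C}_r(S)) = \aleph_0$, which is exactly the conclusion that $S$ has countable cellularity. Writing this out is essentially two lines: apply Theorem \ref{18022}, note compact (metrizable) topological groups have countable cellularity, then transfer back along Proposition \ref{0206} iii).

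There is no real obstacle here, since all the substantive work has already been done in Theorem \ref{18022}; the only thing one must be careful about is that the hypotheses of the corollary are literally the hypotheses of Theorem \ref{18022}, so nothing extra needs checking, and the cellularity invariance under $\mathcal{C}_r$ is quoted verbatim from Proposition \ref{0206}. If one wanted to be slightly more self-contained about why a compact metrizable group has countable cellularity, the cleanest remark is that a compact metrizable space is second countable and hence separable, and every separable space has countable cellularity — but this is optional flavor, not a necessary step. Thus the proof is a short composition of two already-established facts.

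\begin{proof}
By Theorem \ref{18022}, $\mathcal{C}_r(S)$ is a compact metrizable topological group; being a compact metrizable space, it is separable and therefore has countable cellularity, so $c(\mathcal{C}_r(S)) = \aleph_0$. By Proposition \ref{0206} iii), $c(S) = c(\mathcal{C}_r(S)) = \aleph_0$, hence $S$ has countable cellularity.
\end{proof}
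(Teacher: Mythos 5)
Your proof is correct and follows exactly the paper's own route: the paper derives this corollary from Theorem \ref{18022} combined with the fact that compact topological groups have countable cellularity and the identity $c(S)=c(\mathcal{C}_r(S))$ from Proposition \ref{0206} iii). Nothing is missing.
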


\end{document}